\newtheorem{theorem}{Theorem}[section]
\newtheorem{lemma}[theorem]{Lemma}
\newtheorem{corollary}[theorem]{Corollary}
\theoremstyle{definition}
\theoremstyle{remark}
\newtheorem{remark}[theorem]{Remark}
\numberwithin{equation}{section}
\newcommand{\cred}[1]{#1}   
\renewcommand{\b}[1]{\boldsymbol{#1}}
\newcommand{\ba}{\b{a}}
\newcommand{\bfx}{\b{x}}
\newcommand{\bn}{\b{n}}
\newcommand{\bq}{\b{q}}
\newcommand{\cA}{\mathcal{A}}
\newcommand{\cEN}{\mathcal{E}^\mathrm{N}}
\newcommand{\cENO}{\mathcal{E}^{\mathrm{N}0}}
\newcommand{\cENP}{\mathcal{E}^{\mathrm{N}+}}
\newcommand{\cK}{\mathcal{K}}
\newcommand{\cL}{\mathcal{L}}
\newcommand{\cM}{\mathcal{M}}
\newcommand{\cT}{\mathcal{T}}
\newcommand{\ddiv}{\operatorname{div}}
\newcommand{\dx}[1][x]{\,\mathrm{d}#1}
\newcommand{\Erelestn}{E_{\mathrm{rel},n}^\mathrm{est}} 
\newcommand{\GammaD}{{\Gamma_{\mathrm{D}}}}
\newcommand{\GammaN}{{\Gamma_{\mathrm{N}}}}
\newcommand{\Hdiv}[1][\Omega]{\boldsymbol{H}(\ddiv,#1)}
\newcommand{\ihom}{m}
\newcommand{\interior}{\operatorname{int}}
\newcommand{\MM}{M}
\newcommand{\Ninf}{{N_\infty}}
\newcommand{\norm}[1]{\|#1\|}
\newcommand{\oGammaD}{\overline\Gamma_{\mathrm{D}}}
\newcommand{\R}{\mathbb{R}}
\newcommand{\RT}{\mathbf{RT}_p}
\newcommand{\linspan}{\operatorname{span}}
\newcommand{\VT}{V_h}
\newcommand{\vT}{v_h}
\newcommand{\tqa}{{\b{q}}_h^{\ba}}
\newcommand{\tqT}{\b{q}_h}
\newcommand{\bWa}{\b{W}_{\ba}}
\newcommand{\bWT}{\b{W}_h}
\newcommand{\bwT}{\b{w}_h}
\newcommand{\cETK}{\mathcal{E}_h^K}
\newcommand{\cEaI}{\mathcal{E}^{\mathrm{I}}_{\ba}}
\newcommand{\cEaBE}{\mathcal{E}^{\mathrm{B,E}}_{\ba}}
\newcommand{\cEaBD}{\mathcal{E}^{\mathrm{B,D}}_{\ba}}
\newcommand{\cEaBN}{\mathcal{E}^{\mathrm{B,N}}_{\ba}}
\newcommand{\da}{d_h^{\ba}}
\newcommand{\NT}{\mathcal{N}_h}
\newcommand{\NTD}{\mathcal{N}_h^{\mathrm{D}}}
\newcommand{\NTI}{\mathcal{N}_h^{\mathrm{I}}}
\newcommand{\NTK}{\mathcal{N}_h^K}
\newcommand{\NTN}{\mathcal{N}_h^{\mathrm{N}}}
\newcommand{\NTGa}{\mathcal{N}^{\cT}_{\gamma}}
\newcommand{\HF}{\psi_{\ba}}
\newcommand{\Ta}{\cT_{\ba}}
\newcommand{\Oma}{\omega_{\ba}}
\newcommand{\poma}{\partial\Oma}
\newcommand{\rTa}{{\tilde r_h^{\ba}}}
\newcommand{\gTa}{\tilde g_h^{\ba}}
\newcommand{\PpTast}{P_{p}^{*}(\Ta)}
\newcommand{\PpT}{P_p(\Ta)}
\newcommand{\tsa}{{\b{s}}_h^{\ba}}
\begin{document}

\title[New guaranteed lower bounds on eigenvalues]{New guaranteed lower bounds on eigenvalues by conforming finite elements}


\author{Tom\'a\v{s} Vejchodsk\'y}
\address{Tom\'a\v{s} Vejchodsk\'y, Institute of Mathematics, Czech Academy of Sciences, \v{Z}itn\'a 25, CZ-115 67 Praha 1, Czech
Republic}
\email{vejchod@math.cas.cz}
\thanks{T.\ Vejchodsk\'y gratefully acknowledges the support of Neuron Fund for Support of Science, project no.~24/2016, and the institutional support RVO~67985840.}

\author{Ivana \v{S}ebestov\'a}
\address{Ivana \v{S}ebestov\'a, Departamento de Ingenier{\'\i}a Matem\'atica, Facultad de Ciencias F{\'\i}sicas y Matem\'aticas, Universidad de Concepci\'on, Casilla 160-C, Concepci\'on, Chile}
\email{isebestova@udec.cl}
\thanks{I.\ \v{S}ebestov\'a acknowledges the support of a Fondecyt Postdoctoral Grant no.~3150047 and of Anillo ANANUM, ACT1118, Comisi\'on Nacional de Investigaci\'on Cient\'ifica y
Tecnol\'ogica (Chile).}

\subjclass[2010]{Primary 65N25. Secondary 65N30, 65N15. Keywords: elliptic symmetric partial differential operator, a posteriori error estimator, flux reconstruction, adaptivity, eigenvalues.}

\date{}

\dedicatory{}

\begin{abstract}
We provide two new methods for computing lower bounds of eigenvalues of symmetric elliptic second-order differential operators with mixed boundary conditions of Dirichlet, Neumann, and Robin type. The methods generalize ideas of Weinstein's and Kato's bounds and they are designed for a simple and straightforward implementation in the context of the standard finite element method. These lower bounds are obtained by a posteriori error estimators based on local flux reconstructions, which can be naturally utilized for adaptive mesh refinement.
We derive these bounds, prove that they estimate the exact eigenvalues from below, and
illustrate their practical performance by a numerical example.
\end{abstract}

\maketitle


\section{Introduction}
\label{se:intro}

We consider the problem of finding
eigenvalues $\lambda_i\in\R$ and nonzero eigenfunctions $u_i \in V$, $i=1,2,\dots$, such that
\begin{equation}
  \label{eq:EP1}
  a(u_i,v) = \lambda_i b(u_i, v) \quad \forall v \in V,
\end{equation}
where $a$ is the inner product in the real Hilbert space $V$ and $b$ is a symmetric positive semidefinite bilinear form on $V$, see Section~\ref{se:lbounds} for details.
Considering arbitrary numbers $\lambda_{*,i} \in \R$ and functions $u_{*,i} \in V$, $b(u_{*,i}, u_{*,i}) = 1$, which are supposed to approximate eigenpairs $\lambda_i$, $u_i$, we introduce representatives $w_i \in V$ of residuals uniquely determined by the identity
\begin{equation}
\label{eq:defw}
a(w_i,v) = a(u_{*,i},v) - \lambda_{*,i} b(u_{*,i},v) \quad\forall v \in V.
\end{equation}
Representatives $w_i$ cannot be determined exactly in general, but we can compute estimates of their energy norm.
In the context of the finite element method for symmetric elliptic second-order differential operators, we can efficiently reconstruct the flux $\nabla u_{*,i}$ and compute an accurate guaranteed upper bound $\eta_i$ on the energy norm of $w_i$:
\begin{equation}
\label{eq:abscompl}
  \norm{w_i}_a \leq \eta_i.  
\end{equation}
This enables to compute the following explicit lower bounds on the eigenvalue $\lambda_n$:
\begin{align}
\label{eq:Wein}
  \ell_n &= \frac{1}{4} \left( -\eta_n + \sqrt{\eta_n^2 + 4\lambda_{*,n}} \right)^2,
\\
  \label{eq:Kato}
  L_n &= \lambda_{*,n} \left( 1 + \nu \lambda_{*,n} \sum_{i=n}^s \frac{\eta_i^2}{\lambda_{*,i}^2 (\nu - \lambda_{*,i} ) } \right)^{-1},
\end{align}
where $\nu$ is assumed to satisfy $\lambda_{*,s} < \nu$ for some index $s \geq n$. Quantity $\ell_n$ is proved to be below $\lambda_n$ if $\lambda_{n-1}\lambda_n \leq \lambda_{*,n}^2 \leq \lambda_n \lambda_{n+1}$, see Theorem~\ref{th:Weinlowerbound} below. Similarly, $L_n$ is a lower bound on $\lambda_n$ if $\lambda_{*,s} < \nu \leq \lambda_{s+1}$ and if $\lambda_{*,i}$, $u_{*,i}$, $i=n, n+1, \dots, s$, solve a discrete eigenvalue problem, see Theorem~\ref{th:Katobound} below.

Lower bound $\ell_n$ is quite general and robust, but its convergence in terms of $\eta_n$ is linear, which is suboptimal. Lower bound $L_n$ converges with the optimal quadratic speed, but it requires an additional parameter $\nu$ to be chosen, which may be complicated in some cases. 
The accuracy of $L_n$ depends on the size of $\nu - \lambda_{*,s}$ and this size is limited by the spectral gap $\lambda_{s+1}-\lambda_s$. Therefore we try to choose $s \geq n$ such that the spectral gap $\lambda_{s+1}-\lambda_s$ is relatively large.

Lower bounds $\ell_n$ and $L_n$ are inspired by Weinstein's \cite[Corollary 6.20]{Chatelin1983} and Kato's \cite{Kato1949} bounds, respectively. However, Weinstein's and Kato's bounds are standardly formulated for an abstract operator on a Hilbert space and it is not immediately clear, how to use them in practical computations, especially if the trial functions do not posses extra regularity such that the elliptic operator can be applied pointwise.
Therefore, we modify these classical results in a nontrivial way such that they can be straightforwardly used for the eigenvalue problem \eqref{eq:EP1} and consequently in the context of the finite element method with the standard regularity of trial functions.
The finite element method as a special case of the Galerkin method is well known to provides guaranteed upper bounds on eigenvalues and, thus, in combination with bounds $\ell_n$ and $L_n$ we obtain two-sided bounds and full control of accuracy. For a nice survey of existing eigenvalue bounds including Weinstein's and Kato's bounds and for their generalizations we refer to \cite{Plum1997}.

The difficult problem of lower bounds of eigenvalues have already been studied for decades and several different approaches were developed. Recently, the nonconforming finite element methods were proposed \cite{AndRac:2012,HuHuaLin2014,HuHuaShe2015,LinLuoXie2014,LinXieLuoLiYan:2010,LiuOis2013,LuoLinXie:2012,Rannacher:1979,YanZhaLin:2010}.
These approaches provide typically an asymptotic lower bound in the sense that the lower bound is guaranteed only if the corresponding discretization mesh is sufficiently fine. Guaranteed lower bound for Laplace eigenvalues with homogeneous Dirichlet boundary conditions are obtained even on coarse meshes in \cite{CarGed2014} by using the Crouzeix--Raviart nonconforming finite elements.
Paper \cite{Liu2015} improves this result by removing the separation condition for higher eigenvalues.
A generalization of these ideas to a biharmonic operator is provided in \cite{CarGal2014}.

A lower bound on the smallest eigenvalue is obtained in \cite{Repin2012} by a nonoverlapping decomposition of the domain into subdomains, where the exact eigenvalues are known.
In a sense similar method is proposed in \cite{Kuz_Rep_Guar_low_bound_smal_eig_elip_13}. It is based on an overlapping decomposition of the domain into geometrically simple subdomains and it yields a lower bound on the smallest eigenvalue for homogeneous Neumann or mixed Neumann-Dirichlet boundary conditions. A lower bound on the smallest eigenvalue for a triangle is obtained in \cite{Kobayashi2015} using a scaling. An interesting generalization of the method of eigenvalue inclusions \cite{BehnkeGoerish1994,Plum1991} for the Maxwell operator is provided in \cite{Barrenechea2014}. Reference \cite{GruOva2009} includes both estimators of eigenvalues and corresponding eigenfunctions.

In \cite{Seb_Vejch_2sidedb_eigen_Fr_Poin_trace_14} we propose another approach based on a combination of the method of a priori-a posteriori inequalities \cite{Sigillito:1977,KutSig:1978} and a complementarity technique \cite{Complement:2010,systemaee:2010}. The method in \cite{Seb_Vejch_2sidedb_eigen_Fr_Poin_trace_14} yields a linearly convergent lower bound on the principal eigenvalue and it is based on a global flux reconstructions. Bounds $\ell_n$ and $L_n$ given by \eqref{eq:Wein} and \eqref{eq:Kato}, respectively, improve this result in several aspects. They are applicable for arbitrary eigenvalues, the bound $L_n$ is quadratically convergent, and the flux is reconstructed locally, which enables an efficient and naturally parallel implementation. Note that the used local flux reconstruction was originally proposed in \cite{BraSch:2008} for source problems and we modify it for eigenvalue problems.

The rest of the paper is organized as follows.
Section~\ref{se:lbounds} presents Weinstein's and Kato's method in a general weak setting and proves the lower bounds.
Section~\ref{se:ellop} introduces a particular eigenvalue problem for a symmetric second-order elliptic differential operator, proves its well posedness and briefly describes its finite element discretization.
Section~\ref{se:compl} derives a general guaranteed upper bound \eqref{eq:abscompl} on the representative of the residual based on the complementarity approach.
Section~\ref{se:flux} defines the local flux reconstruction and proves its properties.
Section~\ref{se:numex} illustrates the practical performance of the new lower bounds using the dumbbell shaped domain example.
Finally, Section~\ref{se:concl} draws the conclusions.

\section{Lower bounds in the abstract setting}
\label{se:lbounds}

This section briefly describes the rigorous mathematical setting of the eigenvalue problem~\eqref{eq:EP1}. The generality of this setting enables to treat the standard types of eigenvalue problems such as the Dirichlet, Neumann, Steklov, etc. in a unified manner.

Let $V$ be a real Hilbert space with a scalar product $a(u,v)$ for $u,v\in V$. In particular the form $a(u,v)$ is continuous, bilinear, symmetric and positive definite. Further, let a form $b(u,v)$ for $u,v\in V$ be continuous, bilinear, symmetric, and positive semidefinite, i.e. $b(v,v) \geq 0$ for all $v \in V$. We use notation $\norm{v}_a^2 = a(v,v)$ and $|v|_b^2 = b(v,v)$ for the norm induced by the scalar product $a$ and the seminorm induced by the bilinear form $b$, respectively.
We assume that the seminorm $|\cdot|_b$ is compact with respect to the norm $\norm{\cdot}_a$, i.e. from any sequence bounded in $\norm{\cdot}_a$, we can extract a subsequence which is Cauchy in $|\cdot|_b$.
Under these assumptions we consider the eigenvalue problem \eqref{eq:EP1} and
summarize its properties in the following theorem.

\begin{theorem}
\label{th:properties}
Under the above assumptions, problem \eqref{eq:EP1} has the following properties.
\begin{itemize}
\item[(a)]
There exists \cred{(at most)} countable \cred{(and possibly empty)} sequence of eigenvalues
$$
  0 < \lambda_1 \leq \lambda_2 \leq \lambda_3 \leq \cdots
$$
and the corresponding eigenfunctions can be normalized as
\begin{equation}
\label{eq:normalization}
  b(u_i,u_j) = \delta_{ij}, \quad \forall i,j=1,2,\dots,\cred{\Ninf,}
\end{equation}
\cred{where $\Ninf$ is the number of eigenvalues. If it is infinite, we set $\Ninf=\infty$.}
\item[(b)]
The space $V$ can be decomposed as
\begin{equation}
\label{eq:decomposition}
  V = \cM \oplus \cK,
\end{equation}
where $\cM = \operatorname{span} \{ u_1, u_2, \dots \}$ \cred{is the linear span of all eigenfunctions, $\operatorname{dim} \cM = \Ninf$}, and $\cK = \{v \in V : |v|_b = 0 \}$.
Consequently, any function $v\in V$ can be uniquely decomposed as
\begin{equation}
  \label{eq:vMvK}
  v = v^\cM + v^\cK, \quad\text{where } v^\cM \in \cM \text{ and } v^\cK \in \cK.
\end{equation}
\item[(c)]
The decomposition \eqref{eq:decomposition} satisfies
\begin{align}
  \label{eq:MKa-orth}
  a(v,u) &= 0 \quad\forall v \in \cM,\ \forall u\in \cK, \\
  \label{eq:VKb-orth}
  b(v,u) &= 0 \quad\forall v \in V,\ \forall u \in \cK.
\end{align}
\item[(d)]
Any function $v\in V$ satisfies
\begin{align}
  \label{eq:Parseval}
  |v|_b^2 &= \sum\limits_{i=1}^\Ninf |b(v,u_i)|^2,
\\
  \label{eq:a-norm}
  \norm{v}_a^2 &= \norm{v^\cM}_a^2 + \norm{v^\cK}_a^2 \quad\text{ with }
    \norm{v^\cM}_a^2 = \sum\limits_{i=1}^\Ninf \lambda_i |b(v,u_i)|^2,
\end{align}
where $v^\cM \in \cM$ and $v^\cK \in \cK$ are given by \eqref{eq:vMvK}.
\end{itemize}
\end{theorem}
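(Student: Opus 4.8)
The plan is to recast the generalized eigenvalue problem \eqref{eq:EP1} as an ordinary spectral problem for a compact self-adjoint operator and then invoke the classical spectral theorem. Since $a(\cdot,\cdot)$ is the scalar product on the Hilbert space $V$ and $v \mapsto b(u,v)$ is a continuous linear functional for each fixed $u \in V$, the Riesz representation theorem provides a unique element $Tu \in V$ with
\begin{equation*}
  a(Tu, v) = b(u,v) \quad \forall v \in V,
\end{equation*}
defining a bounded linear operator $T \colon V \to V$. Symmetry of $b$ gives $a(Tu,v) = b(u,v) = b(v,u) = a(Tv,u) = a(u,Tv)$, so $T$ is self-adjoint with respect to $a$, while $a(Tu,u) = |u|_b^2 \geq 0$ shows that $T$ is positive semidefinite. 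Moreover, $Tu = 0$ holds if and only if $b(u,v)=0$ for all $v$, which by the Cauchy--Schwarz inequality for the semidefinite form $b$ is equivalent to $|u|_b = 0$; hence $\ker T = \cK$.

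First I would establish that $T$ is compact, which is the step that actually uses the hypothesis on $b$. The defining identity with $v = T(u_n - u_m)$ together with the Cauchy--Schwarz inequality for $b$ yields
\begin{equation*}
  \norm{Tu_n - Tu_m}_a^2 = b(u_n - u_m,\, T u_n - T u_m) \leq |u_n - u_m|_b\, |Tu_n - Tu_m|_b .
\end{equation*}
Using continuity of $b$ and boundedness of $T$, the factor $|Tu_n - Tu_m|_b$ stays bounded whenever $\{u_n\}$ is bounded in $\norm{\cdot}_a$. By the assumed compactness of $|\cdot|_b$ relative to $\norm{\cdot}_a$, such a sequence has a subsequence that is Cauchy in $|\cdot|_b$, and the displayed estimate then forces the corresponding subsequence $\{Tu_n\}$ to be Cauchy in $\norm{\cdot}_a$. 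Completeness of $V$ gives convergence, so $T$ is compact.

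With $T$ compact, self-adjoint, and positive semidefinite, the spectral theorem furnishes an at most countable family of positive eigenvalues $\mu_1 \geq \mu_2 \geq \cdots > 0$ accumulating only at $0$, an $a$-orthonormal system of eigenvectors, and the orthogonal decomposition $V = \ker T \oplus \overline{\operatorname{ran} T}$, where $\overline{\operatorname{ran} T} = (\ker T)^{\perp}$ is the closed linear span of these eigenvectors. Each nonzero eigenpair $Tu = \mu u$ is, by the defining identity, equivalent to $a(u,v) = \mu^{-1} b(u,v)$ for all $v$, so setting $\lambda_i = \mu_i^{-1}$ produces exactly the eigenvalues of \eqref{eq:EP1}, now ordered as $0 < \lambda_1 \leq \lambda_2 \leq \cdots$; no zero eigenvalue can occur, since $\lambda = 0$ would force $\norm{u}_a = 0$. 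Rescaling the eigenvectors by $\mu_i^{-1/2}$ turns the $a$-orthonormality into the $b$-normalization $b(u_i,u_j) = \delta_{ij}$ of \eqref{eq:normalization} and simultaneously yields $a(u_i,u_j) = \lambda_i \delta_{ij}$. Identifying $\cM$ with $\overline{\operatorname{ran} T} = \linspan\{u_1, u_2, \dots\}$ (understood as the closed span when $\Ninf = \infty$) and $\cK$ with $\ker T$ then establishes parts (a) and (b).

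Finally, parts (c) and (d) follow from these facts together with the key eigenvalue identity $a(v,u_i) = \lambda_i b(v,u_i)$, valid for every $v \in V$. The orthogonality \eqref{eq:VKb-orth} is immediate from Cauchy--Schwarz, since $|u|_b = 0$ for $u \in \cK$, and \eqref{eq:MKa-orth} follows because $a(u_i,u) = \lambda_i b(u_i,u) = 0$ for such $u$. For the expansions, the $a$-orthogonal projection of $v$ onto $\cM$ in the basis $\{u_i/\sqrt{\lambda_i}\}$ simplifies, using $a(v,u_i) = \lambda_i b(v,u_i)$, to $v^\cM = \sum_i b(v,u_i)\,u_i$; substituting this into $\norm{v^\cM}_a^2$ and into $|v|_b^2 = b(v,v^\cM)$ (the $\cK$-part being annihilated by \eqref{eq:VKb-orth}) yields \eqref{eq:a-norm} and \eqref{eq:Parseval}. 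I expect the compactness of $T$ to be the main obstacle, as it is the only place where the structural hypothesis on $b$ enters and requires the Cauchy--Schwarz estimate above; the remaining identities are routine bookkeeping on top of the spectral decomposition.
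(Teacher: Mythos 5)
Your proposal is correct and follows essentially the same route as the paper: both define the solution operator $S$ (your $T$) via $a(Su,v)=b(u,v)$, apply the spectral theorem for compact self-adjoint operators, identify $\cK=\ker S$, and deduce (c) and (d) from the eigenfunction expansion and Parseval's identity. The only difference is that you spell out the compactness of the operator (which the paper dismisses as an elementary exercise) via the estimate $\norm{T(u_n-u_m)}_a^2 \leq |u_n-u_m|_b\,|T(u_n-u_m)|_b$, which is a welcome addition but not a different approach.
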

\begin{proof}
To prove (a), we first mention that whenever $\lambda_i$, $u_i$ is an eigenpair of \eqref{eq:EP1} then $\lambda_i >0$ and $|u_i|_b > 0$, because $0 < \norm{u_i}_a^2 = \lambda_i |u_i|_b^2$.
In order to use the spectral theory of compact operators, we define the solution operator $S: V \rightarrow V$ by the identity
\begin{equation}
\label{eq:solop}
  a(Su,v) = b(u,v) \quad \forall v \in V.
\end{equation}
It is an elementary exercise to prove that the compactness of $|\cdot|_b$ with respect to $\norm{\cdot}_a$ is equivalent to the compactness of the solution operator $S$. Thus, the Hilbert--Schmidt spectral theorem \cite[Theorem~4, Chapter~II, section~3]{Gaal_Lin_anal_repres_theo_73} applied to $S$ provides the existence of the countable sequence of eigenvalues and the corresponding orthogonal system of eigenfunctions.

Statement (b) is another consequence of the Hilbert--Schmidt spectral theorem, because it claims that $V = \cM \oplus \ker S$, where $\ker S = \{ v \in V : Sv=0 \}$ is the kernel of $S$. Thus, it remains to show the equality $\cK = \ker S$. If $u \in \cK$ then
\begin{equation}
  \label{eq:VKb-orth-proof}
  |b(u,v)| \leq |u|_b |v|_b = 0 \quad \forall v \in V.
\end{equation}
Thus, $a(Su,v) = b(u,v) = 0$ for all $v\in V$ and $Su = 0$. On the other hand, if $u\in \ker S$ then $0 = a(Su,u) = b(u,u) = |u|_b^2$ and $u \in \cK$.

To prove identity \eqref{eq:MKa-orth} in (c), we express $v \in\cM$ as $v = \sum_{i=1}^\Ninf c_i u_i$ and proceed as follows:
$$
  a(v,u) = \sum_{i=1}^\Ninf c_i a(u_i, u) = \sum_{i=1}^\Ninf c_i \lambda_i b(u_i, u)
= \sum_{i=1}^\Ninf c_i \lambda_i a(u_i, Su) = 0,
$$
where the last equality holds, because $u \in \cK = \ker S$.
Identity \eqref{eq:VKb-orth} has already been proved in \eqref{eq:VKb-orth-proof}.

Finally, the equalities in (d) follow from the splitting \eqref{eq:vMvK}.
Since $b(v,u^\cK) = a(v, S u^\cK) = 0$ for all $u^\cK \in \cK = \ker S$ and all $v \in V$,
we obtain $|v|_b = |v^\cM|_b$. The expansion $v^\cM = \sum_{i=1}^\Ninf c_i u_i$ with $c_i = b(v,u_i) = a(v,u_i)/\lambda_i$ and the Parseval's identity $|v^\cM|_b^2 = \sum_{i=1}^\Ninf c_i^2$ yields \eqref{eq:Parseval}.
Similarly, by \eqref{eq:MKa-orth} we obtain $\norm{v}_a^2 = \norm{v^\cM}_a^2 + \norm{v^\cK}_a^2$
and easy computation gives $\norm{v^\cM}_a^2 = \sum_{i=1}^\Ninf c_i^2 \lambda_i$, which shows \eqref{eq:a-norm}.
\end{proof}

Properties listed in Theorem~\ref{th:properties} enable to prove an estimate of a certain distance between an arbitrary number $\lambda_{*,n} \in \R$ and the exact spectrum.
\begin{theorem}
\label{th:estwa}
Let $\lambda_i$, $i=1,2,\dots$, be eigenvalues of \eqref{eq:EP1}.
Let $u_{*,n} \in V\setminus\{0\}$ and $\lambda_{*,n}\in \R$ be arbitrary.
Consider $w_n\in V$ given by \eqref{eq:defw}.
Then
\begin{equation}
  \label{eq:disttospectrum}
  \min\limits_i \frac{|\lambda_i-\lambda_{*,n}|^2}{\lambda_i}
   \leq \frac{\norm{w_n}_a^2}{|u_{*,n}|_b^2}.
\end{equation}
\end{theorem}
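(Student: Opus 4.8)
The plan is to expand everything in the $b$-orthonormal system of eigenfunctions $u_i$ together with the kernel $\cK$, and to read off $\norm{w_n}_a^2$ from the norm identity~\eqref{eq:a-norm} of Theorem~\ref{th:properties}(d). I would write $c_j = b(u_{*,n}, u_j)$ for the coefficients of the $\cM$-part of $u_{*,n}$; by the Parseval identity~\eqref{eq:Parseval} these satisfy $|u_{*,n}|_b^2 = \sum_j c_j^2$, which is precisely the denominator on the right-hand side of~\eqref{eq:disttospectrum}. So the whole task reduces to producing a matching lower bound for the numerator $\norm{w_n}_a^2$.

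First I would test the defining identity~\eqref{eq:defw} with $v = u_j$. Using the eigenvalue relation $a(u_{*,n}, u_j) = a(u_j, u_{*,n}) = \lambda_j b(u_j, u_{*,n}) = \lambda_j c_j$ together with $b(u_{*,n}, u_j) = c_j$, this gives
\[
  a(w_n, u_j) = (\lambda_j - \lambda_{*,n})\, c_j,
\]
and hence, dividing by $\lambda_j$ exactly as in the derivation of the coefficients in Theorem~\ref{th:properties}(d),
\[
  b(w_n, u_j) = \frac{a(w_n,u_j)}{\lambda_j} = \frac{\lambda_j - \lambda_{*,n}}{\lambda_j}\, c_j .
\]
These are the coefficients of the $\cM$-component $w_n^\cM$ of $w_n$.

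Next I would substitute them into the norm formula~\eqref{eq:a-norm} to obtain
\[
  \norm{w_n^\cM}_a^2 = \sum_j \lambda_j\, |b(w_n, u_j)|^2 = \sum_j \frac{(\lambda_j - \lambda_{*,n})^2}{\lambda_j}\, c_j^2 .
\]
Since~\eqref{eq:a-norm} also yields $\norm{w_n}_a^2 = \norm{w_n^\cM}_a^2 + \norm{w_n^\cK}_a^2 \geq \norm{w_n^\cM}_a^2$, the kernel part only helps. Dividing by $|u_{*,n}|_b^2 = \sum_j c_j^2$ and bounding the resulting convex combination of the nonnegative numbers $(\lambda_j - \lambda_{*,n})^2/\lambda_j$ from below by their minimum gives
\[
  \frac{\norm{w_n}_a^2}{|u_{*,n}|_b^2}
    \geq \frac{\sum_j \frac{(\lambda_j - \lambda_{*,n})^2}{\lambda_j}\, c_j^2}{\sum_j c_j^2}
    \geq \min_j \frac{(\lambda_j - \lambda_{*,n})^2}{\lambda_j},
\]
which is exactly the claim.

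I do not expect a genuine obstacle: once the spectral decomposition of Theorem~\ref{th:properties} is available, the argument is essentially bookkeeping. The two points that deserve a line of care are the degenerate case $|u_{*,n}|_b = 0$, in which $u_{*,n}\in\cK$ forces every $c_j = 0$, the right-hand side of~\eqref{eq:disttospectrum} is read as $+\infty$, and the inequality holds trivially; and the convergence of the infinite series, which is automatic because identities~\eqref{eq:Parseval}--\eqref{eq:a-norm} are already established for the $\norm{\cdot}_a$-convergent eigenfunction expansions.
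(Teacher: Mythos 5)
Your proof is correct and follows essentially the same route as the paper: both test \eqref{eq:defw} with the eigenfunctions, use $b(w_n,u_i)=a(w_n,u_i)/\lambda_i$ and the norm identity \eqref{eq:a-norm} to get $\norm{w_n}_a^2-\norm{w_n^\cK}_a^2=\sum_i \frac{(\lambda_i-\lambda_{*,n})^2}{\lambda_i}|b(u_{*,n},u_i)|^2$, then drop the kernel term, bound by the minimum, and invoke Parseval's identity \eqref{eq:Parseval}. Your extra remark on the degenerate case $|u_{*,n}|_b=0$ is a sensible addition that the paper leaves implicit.
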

\begin{proof}
By \eqref{eq:a-norm} and \eqref{eq:defw} we express
\begin{multline}
\label{eq:anormwident}
  \norm{w_n}_a^2 - \norm{w_n^\cK}_a^2
    = \sum\limits_{i=1}^\Ninf \lambda_i |b(w_n,u_i)|^2
    = \sum\limits_{i=1}^\Ninf \frac{|a(w_n,u_i)|^2}{\lambda_i}
  \\
    = \sum\limits_{i=1}^\Ninf \frac{|a(u_{*,n},u_i) - \lambda_{*,n} b(u_{*,n},u_i)|^2}{\lambda_i}
    = \sum\limits_{i=1}^\Ninf \frac{|\lambda_i - \lambda_{*,n}|^2}{\lambda_i} |b(u_{*,n},u_i)|^2.
\end{multline}
Thus, 
$$
  \norm{w_n}_a^2 \geq
    \min_i \frac{|\lambda_i - \lambda_{*,n}|^2}{\lambda_i} \sum\limits_{i=1}^\Ninf |b(u_{*,n},u_i)|^2
$$
and Parseval's identity \eqref{eq:Parseval} immediately yields the statement \eqref{eq:disttospectrum}.
\end{proof}

Next we prove that Theorem~\ref{th:estwa} yields a lower bound for the eigenvalue $\lambda_n$, provided
an approximation $\lambda_{*,n}$ is not far away from $\lambda_n$ in the sense that
\begin{equation}
\label{eq:closest}
  \sqrt{\lambda_{n-1} \lambda_n} \leq \lambda_{*,n}
  \leq \sqrt{\lambda_n \lambda_{n+1}}.
\end{equation}

\begin{theorem}
\label{th:Weinlowerbound}
Let $u_{*,n}\in V$, $|u_{*,n}|_b = 1$, be arbitrary and let $\lambda_{*,n} \in \R$ satisfies
the closeness condition \eqref{eq:closest}.
Further, let there be $\eta_n \in \R$ such that $w_n\in V$ given by \eqref{eq:defw} satisfies \eqref{eq:abscompl}.
Then $\ell_n$ defined in \eqref{eq:Wein} satisfies
\begin{equation}
\label{eq:Weinbound}
  \ell_n \leq \lambda_n.
\end{equation}
\end{theorem}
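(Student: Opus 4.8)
The plan is to feed the distance estimate of Theorem~\ref{th:estwa} into the closeness condition \eqref{eq:closest} and then resolve a quadratic inequality for $\sqrt{\lambda_n}$. Since $|u_{*,n}|_b = 1$ and $\norm{w_n}_a \leq \eta_n$ by \eqref{eq:abscompl}, the estimate \eqref{eq:disttospectrum} collapses to
$$
  \min_i \frac{(\lambda_i - \lambda_{*,n})^2}{\lambda_i} \leq \eta_n^2 .
$$
The crux is to show that under \eqref{eq:closest} this minimum over $i$ is attained at $i=n$, so that the single inequality $(\lambda_n - \lambda_{*,n})^2 \leq \eta_n^2 \lambda_n$ is available.

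To locate the minimizer I would study the scaled distance $f(\lambda) = (\lambda - \lambda_{*,n})^2/\lambda = \lambda - 2\lambda_{*,n} + \lambda_{*,n}^2/\lambda$ on $(0,\infty)$. A direct computation gives, for any $0 < \lambda_a < \lambda_b$,
$$
  f(\lambda_a) - f(\lambda_b) = (\lambda_a - \lambda_b)\left(1 - \frac{\lambda_{*,n}^2}{\lambda_a \lambda_b}\right),
$$
so that $f(\lambda_a) \leq f(\lambda_b)$ holds precisely when $\lambda_a \lambda_b \geq \lambda_{*,n}^2$. Squaring \eqref{eq:closest} shows it is equivalent to $\lambda_{n-1}\lambda_n \leq \lambda_{*,n}^2 \leq \lambda_n \lambda_{n+1}$. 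Applying the comparison criterion to the ordered eigenvalues: for $i<n$ monotonicity yields $\lambda_i \lambda_n \leq \lambda_{n-1}\lambda_n \leq \lambda_{*,n}^2$, hence $f(\lambda_n) \leq f(\lambda_i)$; for $i>n$ one has $\lambda_n \lambda_i \geq \lambda_n \lambda_{n+1} \geq \lambda_{*,n}^2$, hence again $f(\lambda_n) \leq f(\lambda_i)$. (For $n=1$ the first case is vacuous and only the right half of \eqref{eq:closest} is needed.) Consequently $f(\lambda_n) = \min_i f(\lambda_i) \leq \eta_n^2$.

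Finally I would turn $(\lambda_n - \lambda_{*,n})^2 \leq \eta_n^2 \lambda_n$ into the explicit bound. Setting $t = \sqrt{\lambda_n} > 0$, the consequence $\lambda_{*,n} - \lambda_n \leq |\lambda_n - \lambda_{*,n}| \leq \eta_n \sqrt{\lambda_n}$ becomes $t^2 + \eta_n t - \lambda_{*,n} \geq 0$. As $\lambda_{*,n}>0$ follows from \eqref{eq:closest}, this upward parabola has exactly one positive root $t_+ = \tfrac12\bigl(-\eta_n + \sqrt{\eta_n^2 + 4\lambda_{*,n}}\bigr)$, and the inequality forces the positive number $t$ to satisfy $t \geq t_+$. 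Squaring the nonnegative quantities gives $\lambda_n \geq t_+^2 = \ell_n$, which is \eqref{eq:Weinbound}.

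The main obstacle is the middle step: verifying that the closeness condition \eqref{eq:closest} is exactly what makes $\lambda_n$ the nearest eigenvalue in the scaled distance $f$. The sign bookkeeping in the pairwise comparison and the correct selection of the positive quadratic root are the two places where care is required.
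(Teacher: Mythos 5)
Your proposal is correct and follows essentially the same route as the paper: it feeds Theorem~\ref{th:estwa} into the closeness condition to show the scaled distance $(\lambda_i-\lambda_{*,n})^2/\lambda_i$ is minimized at $i=n$ (your pairwise comparison of $f$ is exactly the paper's inequality $\lambda_{*,n}^2(\lambda_i-\lambda_n)\leq\lambda_n\lambda_i(\lambda_i-\lambda_n)$ in different packaging), and then resolves the resulting quadratic inequality, which you do in the variable $\sqrt{\lambda_n}$ rather than in $\lambda_n$ itself. The two final steps are algebraically equivalent, so no substantive difference remains.
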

\begin{proof}
For brevity, let us put $\lambda_* = \lambda_{*,n}$ and $u_*=u_{*,n}$.
Let us notice that condition \eqref{eq:closest} implies that $\lambda_i \lambda_n \leq \lambda_*^2$ for all $i=1,2,\dots,n-1$ and that $\lambda_*^2 \leq \lambda_n \lambda_i$ for all $i=n+1, n+2, \dots, \cred{\Ninf}$.
Consequently, inequality $\lambda_*^2(\lambda_i - \lambda_n) \leq \lambda_n \lambda_i (\lambda_i - \lambda_n)$
holds true for all $i=1,2,\dots, \cred{\Ninf}$. This inequality is however equivalent to
$$
  \frac{(\lambda_n - \lambda_*)^2}{\lambda_n} \leq \frac{(\lambda_i - \lambda_*)^2}{\lambda_i}
  \quad \forall i=1,2,\dots, \cred{\Ninf}.
$$
Hence, using this estimate in \eqref{eq:disttospectrum}, bound \eqref{eq:abscompl}, and assumption $|u_{*,n}|_b = 1$,
we immediately obtain
$$
 \frac{(\lambda_n - \lambda_*)^2}{\lambda_n} \leq \eta_n^2. 
$$
This can be rewritten as the quadratic inequality $\lambda_n^2 - \bigl(2\lambda_* + \eta_n^2 \bigr)\lambda_n + \lambda_*^2 \leq 0$ and this inequality can only be satisfied if \eqref{eq:Weinbound} holds true.
\end{proof}

Lower bound $\ell_n$ is quite universal and robust, but its convergence with respect to $\eta_n$ is linear, which is suboptimal.
However, inspired by Kato's bound \cite{Kato1949} we are able to derive a quadratically convergent lower bound. Before we do it, we introduce an auxiliary lemma.
\begin{lemma}
Let $u_* \in V$ and $\lambda_* \in \R$ be arbitrary and let $w\in V$ be given by
\begin{equation}
\label{eq:defw*}
  a(w,v) = a(u_*,v) - \lambda_* b(u_*,v) \quad\forall v \in V,
\end{equation}
see \eqref{eq:defw}.
Then components $u_*^\cK$ and $w^\cK$ defined in \eqref{eq:vMvK} satisfy
\begin{equation}
\label{eq:anormu0w0}
  \norm{u_*^\cK}_a = \norm{w^\cK}_a.
\end{equation}
Moreover,
\begin{equation}
\label{eq:bsqoverlambda}
\sum_{i=1}^\Ninf \frac{|b(u_*, u_i)|^2}{\lambda_i}
  = \frac{1}{\lambda_*^2} \left( \norm{w}_a^2 + 2 \lambda_* |u_*|_b^2 - \norm{u_*}_a^2 \right).
\end{equation}
\end{lemma}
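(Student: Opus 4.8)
The plan is to treat the two identities separately, deriving the first purely from the orthogonality structure of Theorem~\ref{th:properties} and then using it to cancel the kernel contributions in the second.

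For \eqref{eq:anormu0w0}, I would test the defining identity \eqref{eq:defw*} against an arbitrary $v\in\cK$. Since $b(u_*,v)=0$ by \eqref{eq:VKb-orth}, this collapses to $a(w,v)=a(u_*,v)$ for all $v\in\cK$. Decomposing $w=w^\cM+w^\cK$ and $u_*=u_*^\cM+u_*^\cK$ as in \eqref{eq:vMvK} and using the $a$-orthogonality \eqref{eq:MKa-orth} to annihilate the $\cM$-parts against $v\in\cK$, I get $a(w^\cK,v)=a(u_*^\cK,v)$ for all $v\in\cK$. Taking $v=w^\cK-u_*^\cK\in\cK$ forces $w^\cK=u_*^\cK$, which is in fact stronger than the claimed norm equality \eqref{eq:anormu0w0}.

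For \eqref{eq:bsqoverlambda}, I would reuse the computation already performed in \eqref{eq:anormwident}: with $w$, $u_*$, $\lambda_*$ in place of $w_n$, $u_{*,n}$, $\lambda_{*,n}$, the identical manipulation (passing $b(w,u_i)$ to $a(w,u_i)/\lambda_i$ and using $a(w,u_i)=(\lambda_i-\lambda_*)b(u_*,u_i)$) gives
\begin{equation*}
  \norm{w}_a^2 - \norm{w^\cK}_a^2 = \sum_{i=1}^\Ninf \frac{(\lambda_i-\lambda_*)^2}{\lambda_i}\,|b(u_*,u_i)|^2 .
\end{equation*}
I would then expand $(\lambda_i-\lambda_*)^2/\lambda_i = \lambda_i - 2\lambda_* + \lambda_*^2/\lambda_i$ and split the right-hand side into three sums. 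By the norm identity \eqref{eq:a-norm} the first sum equals $\norm{u_*^\cM}_a^2$, by Parseval's identity \eqref{eq:Parseval} the second equals $|u_*|_b^2$, and the third is exactly $\lambda_*^2$ times the quantity to be isolated.

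Finally I would invoke the first part: replacing $\norm{w^\cK}_a^2$ by $\norm{u_*^\cK}_a^2$ on the left and writing $\norm{u_*^\cM}_a^2 = \norm{u_*}_a^2 - \norm{u_*^\cK}_a^2$ via \eqref{eq:a-norm}, the two $\norm{u_*^\cK}_a^2$ terms cancel, leaving $\norm{w}_a^2 = \norm{u_*}_a^2 - 2\lambda_*|u_*|_b^2 + \lambda_*^2\sum_{i=1}^\Ninf |b(u_*,u_i)|^2/\lambda_i$; solving for the sum yields \eqref{eq:bsqoverlambda}. The only genuinely nontrivial point is the first identity, as it is precisely what guarantees the exact cancellation of the kernel contributions; the rest is bookkeeping with expansions already established earlier in the paper.
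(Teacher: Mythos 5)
Your proposal is correct and follows essentially the same route as the paper: both parts rest on testing \eqref{eq:defw*} against elements of $\cK$ and then reusing the expansion \eqref{eq:anormwident} together with \eqref{eq:a-norm} and \eqref{eq:Parseval} to cancel the kernel contributions. The only (harmless) deviation is in the first part, where the paper tests with $v=w^\cK$ and $v=u_*^\cK$ separately and concludes $\norm{w^\cK}_a^2=a(u_*^\cK,w^\cK)=\norm{u_*^\cK}_a^2$ by symmetry, whereas your choice $v=w^\cK-u_*^\cK$ yields the slightly stronger conclusion $w^\cK=u_*^\cK$, of which \eqref{eq:anormu0w0} is an immediate consequence.
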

\begin{proof}
Using $v = w^\cK$ in \eqref{eq:defw*} and orthogonalities \eqref{eq:MKa-orth} and \eqref{eq:VKb-orth}, we obtain
$\norm{w^\cK}_a^2 = a(u_*^\cK,w^\cK)$. Similarly, by using $v=u_*^\cK$ in \eqref{eq:defw*}, we have
$a(w^\cK,u_*^\cK) = \norm{u_*^\cK}_a^2$ and equality \eqref{eq:anormu0w0} is proved.

Using \eqref{eq:anormwident}, \eqref{eq:a-norm}, and \eqref{eq:Parseval}, we obtain equality
\begin{multline*}
  \norm{w}_a^2 - \norm{w^\cK}_a^2
  = \sum\limits_{i=1}^\Ninf \left( \lambda_i - 2 \lambda_* + \frac{\lambda_*^2}{\lambda_i} \right)  |b(u_*,u_i)|^2
\\
  = \norm{u_*}_a^2 - \norm{u_*^\cK}_a^2 - 2 \lambda_* |u_*|_b^2
    + \lambda_*^2 \sum\limits_{i=1}^\Ninf \frac{|b(u_*,u_i)|^2}{\lambda_i}.
\end{multline*}
Identity \eqref{eq:bsqoverlambda} now follows by employing \eqref{eq:anormu0w0} and a simple rearrangement.
\end{proof}

\begin{theorem}
\label{th:Katobound}
Let $\tilde u_{*,i} \in V$ for $i = r, r+1, \dots, s$ be arbitrary.
Let $\widetilde V_* = \linspan \{ \tilde u_{*,r}, \tilde u_{*,r+1}, \dots, \tilde u_{*,s} \}$.
Let $\lambda_{*,i} > 0$ and $u_{*,i} \in \widetilde V_*$, $|u_{*,i}|_b = 1$, $i = r, r+1, \dots, s$,
be the eigenvalues sorted in ascending order and the corresponding eigenfunctions of problem
\begin{equation}
\label{eq:smallEP}
  a( u_{*,i}, v_* ) = \lambda_{*,i} b( u_{*,i}, v_* )
  \quad \forall v_* \in \widetilde V_*.
\end{equation}
Let there exist $\nu > 0$ satisfying
\begin{equation}
  \label{eq:nucond}
  \cred{\lambda_{s-1} \leq}
  \lambda_{*,s} < \nu \leq \lambda_{s+1},
\end{equation}
where $\lambda_{s+1}$ is the eigenvalue of \eqref{eq:EP1}.
Further, let $w_i \in V$ be given by \eqref{eq:defw}
and let $\eta_i > 0$ bounds $\norm{w_i}_a$ for all $i=r,r+1, \dots, s$ as in \eqref{eq:abscompl}.
Then $L_n$ defined in \eqref{eq:Kato} satisfies
\begin{equation}
\label{eq:Katobound}
  L_n \leq \lambda_n
\end{equation}
for all $n=r,r+1,\dots, s$.
\end{theorem}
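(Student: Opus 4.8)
The plan is to turn the statement into the classical Kato bound for the solution operator $S$ from \eqref{eq:solop} and then to prove that bound by a spectral--projection argument.

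First I would extract a clean per-eigenfunction identity from the preceding lemma. Testing \eqref{eq:smallEP} with $v_* = u_{*,i}$ gives $\norm{u_{*,i}}_a^2 = \lambda_{*,i}$, and since $|u_{*,i}|_b = 1$ the right-hand side of \eqref{eq:bsqoverlambda} collapses, so that with $c_{ij} := b(u_{*,i},u_j)$ one obtains
$$\sum_{j} \frac{c_{ij}^2}{\lambda_j} = \frac{1}{\lambda_{*,i}} + \frac{\norm{w_i}_a^2}{\lambda_{*,i}^2} \le \frac{1}{\lambda_{*,i}} + \frac{\eta_i^2}{\lambda_{*,i}^2}, \qquad i = n,\dots,s,$$
using \eqref{eq:abscompl}. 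Together with Parseval \eqref{eq:Parseval}, giving $\sum_j c_{ij}^2 = 1$, and the $b$-orthonormality $\sum_j c_{ij}c_{kj} = b(u_{*,i},u_{*,k}) = \delta_{ik}$ of the discrete eigenfunctions, this furnishes the three ingredients of Kato's method: the zeroth and the reciprocal first spectral moments of each $u_{*,i}$, and the column bound $\sum_{i=n}^s c_{ij}^2 \le 1$ encoding the mutual $b$-orthogonality of the cluster.

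Next I would pass to the solution operator $S$, whose eigenvalues are $\mu_j = 1/\lambda_j$ on $\cM$ and $0$ on $\cK$. A short computation from \eqref{eq:defw} and \eqref{eq:solop} shows $S u_{*,i} = \lambda_{*,i}^{-1}(u_{*,i} - w_i)$, so the $a$-normalised vectors $u_{*,i}/\sqrt{\lambda_{*,i}}$ are the Ritz vectors of $S$ with Ritz values $\mu_{*,i} = 1/\lambda_{*,i}$ and residuals controlled by $\eta_i$. Writing $\sigma = 1/\nu$, condition \eqref{eq:nucond} becomes the spectral gap $\mu_{s+1} \le \sigma < \mu_{*,s} \le \mu_{*,n}$, with $\mu_j > \sigma$ for all $j \le s$ since $\lambda_j \le \lambda_{*,j} \le \lambda_{*,s} < \nu$ by Rayleigh--Ritz. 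A direct rearrangement then shows that the asserted bound \eqref{eq:Katobound}, i.e.\ $\mu_n \le 1/L_n$ for $L_n$ from \eqref{eq:Kato}, is implied by the Kato estimate
$$\mu_n \le \mu_{*,n} + \sum_{i=n}^s \frac{\tilde\rho_i^2}{\mu_{*,i} - \sigma}, \qquad \tilde\rho_i^2 := \frac{\norm{w_i}_a^2}{\lambda_{*,i}^3},$$
because, after substituting $\mu_{*,i}-\sigma = (\nu-\lambda_{*,i})/(\nu\lambda_{*,i})$, each term is bounded by $\nu\eta_i^2 / (\lambda_{*,i}^2(\nu-\lambda_{*,i}))$.

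Finally I would prove this Kato estimate. Let $P$ be the $a$-orthogonal spectral projection of $S$ onto the invariant subspace belonging to $\mu_1,\dots,\mu_s$, so that $S - \sigma$ is nonnegative on $\operatorname{range}P$ and nonpositive on $\operatorname{range}(I-P)$. Applying Cauchy interlacing to the compression of $S$ to $P U$, where $U = \linspan\{u_{*,n},\dots,u_{*,s}\}$, produces a vector whose Rayleigh quotient is at least $\mu_n$; comparing this quotient with the diagonal values $\mu_{*,i}$ of the compression of $S$ to $U$ and absorbing the difference into the residuals $\tilde\rho_i$ weighted by the gaps $\mu_{*,i}-\sigma$ should then yield the estimate. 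I expect this last step to be the main obstacle: a single-vector Temple/Kato bound fails here because the intermediate eigenvalues $\mu_{n+1},\dots,\mu_s$ lie inside the interval $(\sigma,\mu_n)$, so the whole $b$-orthonormal cluster $u_{*,n},\dots,u_{*,s}$ must be exploited jointly, and the delicate point is to verify that their residual contributions assemble into exactly the sum $\sum_{i=n}^s \tilde\rho_i^2/(\mu_{*,i}-\sigma)$ while the projection $P$ discards only the harmless nonpositive part.
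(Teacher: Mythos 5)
Your reduction to the solution operator $S$ is correct and cleanly done: the identity $Su_{*,i}=\lambda_{*,i}^{-1}(u_{*,i}-w_i)$, the residual norms $\tilde\rho_i^2=\norm{w_i}_a^2/\lambda_{*,i}^3$ of the $a$-normalised Ritz vectors, and the verification that the operator-level estimate $\mu_n\le\mu_{*,n}+\sum_{i=n}^s\tilde\rho_i^2/(\mu_{*,i}-\sigma)$ with $\sigma=1/\nu$ implies $L_n\le\lambda_n$ all check out. But the proof stops exactly where the real work begins: the cluster Kato estimate itself is never proved, and you say so yourself (``I expect this last step to be the main obstacle \dots the delicate point is to verify that their residual contributions assemble into exactly the sum''). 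The interlacing-plus-spectral-projection sketch does not supply the mechanism that makes the intermediate eigenvalues $\mu_{n+1},\dots,\mu_s$ harmless, nor the step that produces the specific weights $1/(\mu_{*,i}-\sigma)$. As written, this is a genuine gap, not a complete proof.

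For comparison, the paper closes precisely this gap by three concrete devices. First, it constructs a single test function $z_*=\sum_{i=n}^s\gamma_i u_{*,i}$ whose coefficients are chosen so that $b(z_*,u_i)=0$ for $i=n+1,\dots,s$ (an existence/dimension-count argument: $s-n$ homogeneous conditions on $s-n+1$ unknowns, plus normalisation $\sum\gamma_i^2=1$). Second, it applies the separation polynomial inequality $\sum_i\lambda_i^{-1}(\lambda_i-\lambda_n)(\lambda_i-\nu)|b(z_*,u_i)|^2\ge 0$, which holds term by term because the sign condition $(\lambda_i-\lambda_n)(\lambda_i-\nu)\ge0$ is only needed for $i\le n$ and $i\ge s+1$ --- the orthogonality built into $z_*$ kills the problematic middle range. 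Third, after an exact computation of $\norm{w^z}_a^2$ for the residual of the pair $(\lambda_{*,n},z_*)$ (equation \eqref{eq:anormwz}), it decouples the unknown coefficients $\gamma_i$ by the weighted Cauchy--Schwarz step \eqref{eq:rhsest2}, $\bigl(\sum_i|\gamma_i|\eta_i/\lambda_{*,i}\bigr)^2\le\bigl(\sum_i(\nu-\lambda_{*,i})\gamma_i^2\bigr)\bigl(\sum_i\eta_i^2/(\lambda_{*,i}^2(\nu-\lambda_{*,i}))\bigr)$, and it is exactly this step that generates the sum appearing in $L_n$. None of these three ingredients appears in your proposal; the per-eigenfunction moment identities and column bounds you derive are true but are not by themselves sufficient, because the cross terms $a(w_i,w_j)$ in $\norm{w^z}_a^2$ must be controlled jointly, not eigenfunction by eigenfunction. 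A secondary, smaller issue: your claim $\lambda_j\le\lambda_{*,j}$ ``by Rayleigh--Ritz'' is unjustified when $r>1$ (min--max on an $(s-r+1)$-dimensional subspace only gives $\lambda_{*,i}\ge\lambda_{i-r+1}$); the paper avoids needing any such inequality by routing the sign conditions through the hypothesis $\lambda_{s-1}\le\lambda_{*,s}<\nu$ in \eqref{eq:nucond}.
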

\begin{proof}
Since \eqref{eq:smallEP} corresponds to a generalized matrix eigenvalue problem with a symmetric and positive definite matrix, eigenfunctions
$u_{*,r}, u_{*,r+1}, \dots, u_{*,s}$ form an orthogonal system:
\begin{equation}
  \label{eq:ONu*}
  a( u_{*,i}, u_{*,j} ) = \lambda_{*,i} \delta_{ij}
  \quad\text{and}\quad
  b( u_{*,i}, u_{*,j} ) = \delta_{ij}
  \quad \forall i,j=r,r+1,\dots,s.
\end{equation}

Let $n \in \{r, r+1, \dots, s\}$ 
be arbitrary.
Let us consider a function $z_* = \sum_{i=n}^s \gamma_i u_{*,i}$, where coefficients $\gamma_i$ are uniquely determined by requirements $b(z_*, u_i) = 0$ for $i=n+1,n+2,\dots,s$ and $\sum_{i=n}^s \gamma_i^2 = 1$. Using \eqref{eq:ONu*}, we easily derive expressions
\begin{equation}
  \label{eq:normz}
  \norm{z_*}_a^2 = \sum_{i=n}^s \lambda_{*,i} \gamma_i^2
  \quad\text{and}\quad
  |z_*|_b^2 = 1.
\end{equation}
Now, we consider $w^z \in V$ given by
\begin{equation}
  \label{eq:defwz}
  a(w^z,v) = a(z_*,v) - \lambda_{*,n} b(z_*,v) \quad\forall v \in V
\end{equation}
and work out an expression for $\norm{w^z}_a$.
Using \eqref{eq:defwz}, definition of $z_*$, and \eqref{eq:defw}, we have
\begin{multline}
  \label{eq:anormwzB}
  \norm{w^z}_a^2 = a(z_*,w^z) - \lambda_{*,n} b(z_*,w^z)
   = \sum_{i=n}^s \gamma_i \left[ a(u_{*,i},w^z) - \lambda_{*,n} b(u_{*,i},w^z) \right]
\\
   = \sum_{i=n}^s \gamma_i \left[ a(w_i,w^z) + (\lambda_{*,i}- \lambda_{*,n}) b(u_{*,i},w^z) \right].
\end{multline}
Considering $i \in \{n,n+1,\dots,s\}$, we
notice that identities \eqref{eq:defw} and \eqref{eq:ONu*} easily imply $a(u_{*,j},w_i) = 0$ and $a(w_j,w_i) = -\lambda_{*,j} b(u_{*,j}, w_i)$ for all $j=n,n+1,\dots,s$.
Consequently, using \eqref{eq:defwz} with $v=w_i$ and definition of $z_*$, we derive equality
\begin{equation}
  \label{eq:awiwz}
  a(w_i,w^z) = \sum_{j=n}^s \gamma_j \left[ a(u_{*,j}, w_i) - \lambda_{*,n} b(u_{*,j}, w_i) \right]
   = \sum_{j=n}^s \gamma_j \frac{\lambda_{*,n}}{\lambda_{*,j}} a(w_j,w_i).
\end{equation}
Setting $v=u_{*,i}$ in \eqref{eq:defwz}, using definition of $z_*$ and orthogonality \eqref{eq:ONu*}, we have
\begin{equation}
  \label{eq:auiwz}
  a(u_{*,i}, w^z) = a(z_*, u_{*,i}) - \lambda_{*,n} b(z_*, u_{*,i}) = \gamma_i (\lambda_{*,i} - \lambda_{*,n}).
\end{equation}
The last auxiliary step is to take $v=w^z$ in \eqref{eq:defw} and utilize \eqref{eq:awiwz} and \eqref{eq:auiwz} to obtain
\begin{equation}
  \label{eq:buiwz}
  \lambda_{*,i} b(u_{*,i}, w^z) = a(u_{*,i}, w^z) - a(w_i, w^z)
   = \gamma_i (\lambda_{*,i} - \lambda_{*,n}) - \sum_{j=n}^s \gamma_j \frac{\lambda_{*,n}}{\lambda_{*,j}} a(w_j,w_i).
\end{equation}
Finally, we substitute \eqref{eq:awiwz} and \eqref{eq:buiwz} into \eqref{eq:anormwzB} and after straightforward manipulations we arrive at
\begin{equation}
  \label{eq:anormwz}
  \norm{w^z}_a^2 = \sum_{i=n}^s \gamma_i^2 \frac{(\lambda_{*,i} - \lambda_{*,n})^2}{\lambda_{*,i}}
    + \lambda_{*,n}^2 \sum_{i=n}^s \sum_{j=n}^s \frac{\gamma_i \gamma_j}{\lambda_{*,i}\lambda_{*,j}} a(w_i,w_j).
\end{equation}

The following chain of inequalities leads to the lower bound \eqref{eq:Katobound}.
\cred{
Due to the \eqref{eq:nucond} we easily verify that $(\lambda_i - \lambda_n)(\lambda_i - \nu) \geq 0$ for all $i \leq n$ and all $i \geq s+1$. Combining this with the definition of $z_*$,
}
we have
$$
  0 \leq \sum_{i=1}^\Ninf \frac{1}{\lambda_i} (\lambda_i - \lambda_n)(\lambda_i - \nu) |b(z_*,u_i)|^2
  = \sum_{i=1}^\Ninf \left( \lambda_i - (\lambda_n+\nu) + \frac{\lambda_n \nu}{\lambda_i} \right) |b(z_*,u_i)|^2.
$$
Employing identities \eqref{eq:a-norm}, \eqref{eq:Parseval}, and \eqref{eq:bsqoverlambda} with $w=w^z$ defined by \eqref{eq:defwz}, $\lambda_* = \lambda_{*,n}$, and $u_* = z_*$, we obtain
$$
  0 \leq \norm{z_*}_a^2 - \norm{z_*^\cK}_a^2 - (\lambda_n + \nu) |z_*|_b^2
    + \frac{\lambda_n \nu}{\lambda_{*,n}^2} \left(
      \norm{w^z}_a^2 + 2\lambda_{*,n} |z_*|_b^2 - \norm{z_*}_a^2 \right).
$$
Now, we substitute relations \eqref{eq:normz} and \eqref{eq:anormwz} into this inequality, use that fact that $\norm{z_*^\cK}_a^2 \geq 0$ and derive
\begin{multline*}
  0 \leq
    \sum_{i=n}^s \lambda_{*,i} \gamma_i^2
    - \lambda_n - \nu
    + \frac{\lambda_n \nu}{\lambda_{*,n}^2} \left(
      \sum_{i=n}^s \gamma_i^2 \frac{(\lambda_{*,i} - \lambda_{*,n})^2}{\lambda_{*,i}}
\right. \\ \left.
    + \lambda_{*,n}^2 \sum_{i=n}^s \sum_{j=n}^s \frac{\gamma_i \gamma_j}{\lambda_{*,i}\lambda_{*,j}} a(w_i,w_j)
    + 2\lambda_{*,n}
    - \sum_{i=n}^s \lambda_{*,i} \gamma_i^2
    \right).
\end{multline*}
Using \cred{identity} $\sum_{i=n}^s \gamma_i^2 = 1$, we rearrange this inequality as
\begin{equation}
  \label{eq:aux1}
  \sum_{i=n}^s ( \nu - \lambda_{*,i}) \gamma_i^2
  \leq
  \frac{\lambda_n}{\lambda_{*,n}} \left[
  \sum_{i=n}^s \frac{\lambda_{*,n}}{\lambda_{*,i}} (\nu - \lambda_{*,i}) \gamma_i^2
  + \nu \lambda_{*,n} \sum_{i=n}^s \sum_{j=n}^s
    \frac{\gamma_i \gamma_j}{\lambda_{*,i}\lambda_{*,j}} a(w_i,w_j)
  \right].
\end{equation}
The first sum on the right hand side can be estimated as
\begin{equation}
  \label{eq:rhsest1}
  \sum_{i=n}^s \frac{\lambda_{*,n}}{\lambda_{*,i}} (\nu - \lambda_{*,i}) \gamma_i^2
   \leq \sum_{i=n}^s (\nu - \lambda_{*,i}) \gamma_i^2,
\end{equation}
because $\lambda_{*,n} \leq \lambda_{*,i} \cred{ < \nu}$ for all $i=n, n+1, \dots, s$.
The double sum on the right hand side of \eqref{eq:aux1} can be bounded using
$a(w_i,w_j) \leq \norm{w_i}_a \norm{w_j}_a \leq \eta_i \eta_j$ and the Cauchy--Schwarz inequality as
\begin{multline}
  \label{eq:rhsest2}
  \sum_{i=n}^s \sum_{j=n}^s
    \frac{\gamma_i \gamma_j}{\lambda_{*,i} \lambda_{*,j}} a(w_i,w_j)
  \leq
    \left( \sum_{i=n}^s |\gamma_i| \frac{\eta_i}{\lambda_{*,i}} \right)^2
\\    
  \leq \left( \sum_{i=n}^s (\nu - \lambda_{*,i}) \gamma_i^2 \right)
       \left( \sum_{i=n}^s \frac{\eta_i^2}{\lambda_{*,i}^2 (\nu - \lambda_{*,i})} \right).
\end{multline}
Estimating the right-hand side of \eqref{eq:aux1} by \eqref{eq:rhsest1} and \eqref{eq:rhsest2} and dividing by 
$\sum_{i=n}^s (\nu - \lambda_{*,i}) \gamma_i^2$, we end up with inequality
$$
  1 \leq \frac{\lambda_n}{\lambda_{*,n}} \left( 1 + \nu \lambda_{*,n} \sum_{i=n}^s \frac{\eta_i^2}{\lambda_{*,i}^2 (\nu - \lambda_{*,i})} \right),
$$
which is equivalent to \eqref{eq:Katobound}.
\end{proof}

Let us make a few remarks about the result presented in Theorem~\ref{th:Katobound}.
\cred{First, choosing $n=s$,} 
the bound \eqref{eq:Katobound} simplifies to
$$
  \lambda_{*,n} \left( 1 + \frac{\nu}{\lambda_{*,n}(\nu - \lambda_{*,n})} \eta_n^2 \right)^{-1} \leq \lambda_n.
$$
Second, accurate estimates $\eta_i$ of the representatives of residuals $\|w_i\|_a$ are computed by the flux reconstruction approach, which we describe in details below in Section~\ref{se:flux}.

Third, lower bound \eqref{eq:Katobound} depends on $\nu$, which is required to satisfy \eqref{eq:nucond}.
This condition cannot be guaranteed unless a lower bound on $\lambda_{s+1}$ is known. If an analytic (and perhaps rough) lower bound on $\lambda_{s+1}$ is known, we can use \eqref{eq:Katobound} to compute more accurate lower bounds on $\lambda_s$ and smaller eigenvalues. However, a priori known lower bounds on eigenvalues are rare in practice. In Section~\ref{se:numex} we illustrate how to compute the needed lower bound by a homotopy method described in \cite{Plum1990,Plum1991}.

A practical approach in applications, where the lower bounds need not to be guaranteed, is to set  $\nu = \ell_{s+1}$ computed by \eqref{eq:Wein}. The bound $\ell_{s+1}$ is guaranteed to be below $\lambda_{s+1}$ under the closeness condition \eqref{eq:closest}, but this condition cannot be verified unless lower bounds on the corresponding eigenvalues are already known.
However, resolving the eigenvalue problem with sufficient accuracy provides a good confidence that the closeness condition holds true. In Section~\ref{se:numex} we illustrate how to use the computed lower bounds to verify a posteriori if the closeness condition is likely to hold.
In addition, we mention that the closeness condition \eqref{eq:closest} is just a sufficient condition in Theorem~\ref{th:Weinlowerbound} and the bound \eqref{eq:Wein} can be below the exact eigenvalue even if the closeness condition is not satisfied. Numerical experiments we performed indicate that this is actually very common situation in practical computations, see Section~\ref{se:numex} below.

The fourth remark concerns the optimal use of Theorem~\ref{th:Katobound}. Estimate \eqref{eq:Katobound} provides $s-r+1$ lower bounds on eigenvalues $\lambda_r$, $\lambda_{r+1}$, \dots, $\lambda_s$, respectively. In particular, we can use the lower bound on $\lambda_s$ as a new value of $\nu$ and use Theorem~\ref{th:Katobound} again to obtain new lower bounds on $\lambda_r$, $\lambda_{r+1}$, \dots, $\lambda_{s-1}$. This process can be repeated $s-r$ times using for $\nu$ the best lower bound computed so far, see \cite{Plum1991}. As the final lower bounds we naturally choose the largest one computed. In Section~\ref{se:numex} below, we combine this recursive process with the lower bound \eqref{eq:Wein} to obtain sharper bounds on rough meshes.

Finally, we remark that Theorem~\ref{th:Katobound} requires the exact equality in \eqref{eq:smallEP}.
If we compute approximate eigenfunctions $\tilde u_{*,i}$ using standard approaches based on the finite element discretization, we can usually take $u_{*,i} = \tilde u_{*,i}$ and the error in identity \eqref{eq:smallEP} is on the level of machine precision.
If not, then we suggest to compute $u_{*,i}$ by solving the (small) eigenvalue problem \eqref{eq:smallEP} by a direct method.
Of course, the exact equality in \eqref{eq:smallEP} cannot be reached due to round-off errors, but this issue can be solved by the interval arithmetics as suggested in \cite{Plum1990,Plum1991}, for example.

\section{Lower bounds for symmetric elliptic operators}
\label{se:ellop}

This section introduces a symmetric elliptic eigenvalue problem for second-order partial differential operators with mixed boundary conditions and all necessary assumptions. It also briefly describes its finite element discretization.

We consider the following eigenvalue problem: find $\lambda_n > 0$ and $u_n \neq 0$ such that
\begin{alignat}{2}
\nonumber
  -\ddiv( \cA \nabla u_n ) + c u_n &= \lambda_n \beta_1 u_n &\quad &\text{in }\Omega, \\
\label{eq:EPstrong}
  (\cA \nabla u_n) \cdot \bn_\Omega + \alpha u_n &= \lambda_n \beta_2 u_n &\quad &\text{on }\GammaN, \\
\nonumber
  u_n &= 0 &\quad &\text{on }\GammaD.
\end{alignat}
In order to formulate this problem in a weak sense, we consider
$\Omega \subset \R^2$ to be a Lipschitz domain with boundary $\partial\Omega$ split into two relatively open disjoint parts $\GammaN$ and $\GammaD$. Symbol $\bn_\Omega$ stands for the unit outward normal vector. Diffusion matrix $\cA \in [L^\infty(\Omega)]^{2\times2}$, reaction coefficient $c \in L^\infty(\Omega)$ and coefficients $\beta_1 \in L^\infty(\Omega)$, $\alpha,\beta_2 \in L^\infty(\GammaN)$ are assumed to be piecewise constant.
Further, we assume $\beta_1 \geq 0$ and $\beta_2 \geq 0$.
Note that the assumption of piecewise constant coefficients and of the two-dimensionality of the domain are not essential. They are needed due to technical reasons connected with the flux reconstruction.
In order to guarantee the symmetry and ellipticity,
we assume $c \geq 0$ and $\alpha \geq 0$ and the matrix $\cA$ to be symmetric and uniformly positive definite,
i.e. we assume existence of a constant $C>0$ such that
$$
  \b{\xi}^T \cA(x) \b{\xi} \geq C |\b{\xi}|^2 \quad\forall \b{\xi} \in \R^2
  \text{ and for almost all } x\in\Omega,
$$
where $|\cdot|$ stands for the Euclidean norm.

Defining the usual space
\begin{equation}
  \label{eq:defV}
  V = \{ v \in H^1(\Omega) : v = 0 \text{ on } \GammaD \},
\end{equation}
we introduce the following weak formulation of \eqref{eq:EPstrong}: find $\lambda_n > 0$ and $u_n \in V \setminus \{0\}$ such that
\begin{equation}
\label{eq:EPweak}
  a(u_n,v) = \lambda_n b(u_n,v) \quad \forall v \in V,
\end{equation}
where
\begin{align}
\label{eq:blf}
  a(u,v) &= (\cA \nabla u,\nabla v) + (c u, v) + (\alpha u, v)_\GammaN,
\\
\label{eq:blfb}
  b(u,v) &= (\beta_1 u, v) + (\beta_2 u, v)_\GammaN,
\end{align}
$(\cdot,\cdot)$ stands for the $L^2(\Omega)$, and
$(\cdot,\cdot)_\GammaN$ for the $L^2(\GammaN)$ inner products.

We assume the form $a(u,v)$ to be a scalar product in $V$. This is the case if at least one of the following conditions is satisfied:
(a) $c > 0$ on a subset of $\Omega$ of positive measure,
(b) $\alpha > 0$ on a subset of $\GammaN$ of positive measure,
(c) measure of $\GammaD$ is positive.
In agreement with the notation introduced above, we denote by $\|{\cdot}\|_a$ and $|{\cdot}|_b$ the norm induced by $a({\cdot}, {\cdot})$ and the seminorm induced by $b({\cdot}, {\cdot})$, respectively.

The following theorem shows the validity of the crucial compactness assumption.
Consequently, eigenproblem \eqref{eq:EPweak} is well defined and posses all properties listed in Theorem~\ref{th:properties}.
\begin{theorem}
Let bilinear forms $a(\cdot,\cdot)$ and $b(\cdot,\cdot)$ be defined by \eqref{eq:blf} and \eqref{eq:blfb} with the above listed requirements on the coefficients. Let $a(\cdot,\cdot)$ be a scalar product in $V$.
Then the seminorm $|\cdot|_b$ is compact with respect to the norm $\norm{\cdot}_a$.
\end{theorem}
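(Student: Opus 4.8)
The plan is to establish the compactness of $|\cdot|_b$ with respect to $\norm{\cdot}_a$ in two stages: first I would prove that $\norm{\cdot}_a$ is equivalent to the full $H^1(\Omega)$ norm on $V$, and then I would exploit the compact Sobolev and trace embeddings to produce the required Cauchy subsequence. This two-step structure cleanly separates the role of the scalar-product hypothesis (needed only for the equivalence) from the standard compact embeddings (needed only for the second step).

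For the norm equivalence, the upper bound $\norm{v}_a \leq C \norm{v}_{H^1(\Omega)}$ is immediate from the boundedness of the coefficients $\cA$, $c$, $\alpha$ together with the continuity of the trace operator $H^1(\Omega) \to L^2(\GammaN)$. The lower bound is the delicate part, and I would argue by contradiction using the Rellich--Kondrachov theorem. Suppose there were a sequence $v_k \in V$ with $\norm{v_k}_{H^1(\Omega)} = 1$ and $\norm{v_k}_a \to 0$. Ellipticity of $\cA$ gives $C \norm{\nabla v_k}_{L^2(\Omega)}^2 \leq (\cA \nabla v_k, \nabla v_k) \leq \norm{v_k}_a^2 \to 0$, so $\nabla v_k \to 0$ in $L^2(\Omega)$. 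The sequence is bounded in $H^1(\Omega)$, so by Rellich--Kondrachov a subsequence converges in $L^2(\Omega)$; combined with $\nabla v_k \to 0$, this subsequence is in fact Cauchy, hence convergent, in $H^1(\Omega)$ to a limit $v$ with $\nabla v = 0$ and $\norm{v}_{H^1(\Omega)} = 1$, so $v \neq 0$. Continuity of $a$ then forces $\norm{v}_a = \lim \norm{v_k}_a = 0$, contradicting the positive definiteness of $a$ guaranteed by the scalar-product hypothesis. A virtue of this argument is that it covers the three admissible cases (a), (b), (c) uniformly, since only the positive definiteness of $a$ is used.

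With the equivalence in hand, compactness follows directly. I would take $\{v_k\}$ bounded in $\norm{\cdot}_a$; by the equivalence it is bounded in $H^1(\Omega)$. The Rellich--Kondrachov theorem yields a subsequence converging in $L^2(\Omega)$, and the compactness of the trace operator into $L^2(\GammaN)$ (through the continuous trace into $H^{1/2}(\partial\Omega)$, the compact embedding $H^{1/2}(\partial\Omega) \hookrightarrow L^2(\partial\Omega)$, and restriction to $\GammaN$) yields a further subsequence converging in $L^2(\GammaN)$. Along this subsequence,
$$
  |v_k - v_l|_b^2 = (\beta_1(v_k-v_l), v_k-v_l) + (\beta_2(v_k-v_l), v_k-v_l)_\GammaN
    \leq \norm{\beta_1}_{L^\infty} \norm{v_k-v_l}_{L^2(\Omega)}^2 + \norm{\beta_2}_{L^\infty} \norm{v_k-v_l}_{L^2(\GammaN)}^2,
$$
and both terms tend to zero, so the subsequence is Cauchy in $|\cdot|_b$, which is exactly the claimed compactness.

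I expect the main obstacle to be the norm-equivalence step, and in particular the realization that it is the positive definiteness of $a$ alone (rather than any structural feature specific to each of the three admissible cases) that rescues the contradiction argument and lets me treat them all at once. The boundary contributions add a secondary technical point, namely the compactness of the trace into $L^2(\GammaN)$, which is standard for Lipschitz domains but must be invoked with care for the relatively open piece $\GammaN$ rather than the whole boundary.
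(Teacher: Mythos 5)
Your proof is correct and follows essentially the same route as the paper: both reduce compactness of $|\cdot|_b$ to the Rellich--Kondrachov theorem and the compactness of the trace operator into $L^2(\GammaN)$, combined with the elementary bound $|v|_b^2 \leq C\bigl(\norm{v}_{L^2(\Omega)}^2 + \norm{v}_{L^2(\GammaN)}^2\bigr)$. The one genuine addition is your explicit contradiction argument for the equivalence of $\norm{\cdot}_a$ with the $H^1(\Omega)$-norm on $V$ --- a step the paper leaves implicit even though it is needed to pass from $\norm{\cdot}_a$-boundedness to $H^1$-boundedness before the compact embeddings can be invoked --- so your version is, if anything, slightly more complete.
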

\begin{proof}
Notice that the definition \eqref{eq:blfb} of the form $b(u,v)$ for $u,v \in V$ is understood as follows
$$
  b(u,v) = (\beta_1 I u, I v)_{L^2(\Omega)} + (\beta_2 \gamma u, \gamma v)_{L^2(\GammaN)},
$$
where $I : V \rightarrow L^2(\Omega)$ is the identity mapping and $\gamma : V \rightarrow L^2(\GammaN)$ the trace operator. The identity $I$ is compact due to Rellich theorem \cite[Theorem~6.3]{Adams_Sob_spaces_03} and the compactness of the trace operator $\gamma$ is proved in \cite[Theorem 6.10.5]{Kuf_John_Fucik_Function_spaces}; see also \cite{Biegert:2009}.

Compactness of $I$ and $\gamma$ implies that from any sequence $\{v_n\} \subset V$ bounded in $\norm{\cdot}_a$ we can extract a subsequence $\{v_{n_k}\}$ such that $\{Iv_{n_k}\}$ is Cauchy in $L^2(\Omega)$ and $\{\gamma v_{n_k}\}$ is Cauchy in $L^2(\GammaN)$. Since
$$
  | v |_b^2 \leq \max\left\{ \norm{\beta_1}_{L^\infty(\Omega)}, \norm{\beta_2}_{L^\infty(\GammaN)} \right\}
    \left( \norm{ I v }_{L^2(\Omega)}^2 + \norm{ \gamma v}_{L^2(\GammaN)}^2 \right)
    \quad \forall v \in V,
$$
we immediately see that the subsequence $\{ v_{n_k} \}$ is Cauchy in $| \cdot |_b$ as well.
\end{proof}

We discretize the eigenvalue problem \eqref{eq:EPweak} by the standard conforming finite element method.
To avoid technicalities with curved elements, we assume the domain $\Omega$ to be polygonal
and consider a conforming (face-to-face) triangular mesh $\cT_h$ consisting of closed triangles called elements. Further, we define the finite element space
\begin{equation}
\label{eq:Vh}
  V_h = \{ v_h \in V : v_h|_K \in P_p(K),\ \forall K \in \cT_h\},
\end{equation}
where $P_p(K)$ stands for the space of polynomials of degree at most $p$ on the triangle $K \in \cT_h$.
With this notation, the finite element approximation of the eigenvalue problem~\eqref{eq:EPweak} reads:
Find $\lambda_{h,n} > 0$ and $u_{h,n} \in V_h\setminus\{0\}$ such that
\begin{equation}
\label{eq:EPFEM}
  a(u_{h,n}, v_h) = \lambda_{h,n} b(u_{h,n}, v_h ) \quad \forall v_h \in V_h.
\end{equation}

\section{Guaranteed bounds based on the complementary energy}
\label{se:compl}

Lower bounds $\ell_n$ and $L_n$ given by \eqref{eq:Wein} and \eqref{eq:Kato}, respectively, require a computable guaranteed upper bounds $\eta_n$ on the energy norm $\| w_n \|_a$ of the representative of the residual defined in \eqref{eq:defw}. In this section, we show how to compute this $\eta_n$. The technique described here is based on the complementary energy (or two energy principle), which can be traced back to the hypercircle method \cite{Synge:1957}, see also \cite{AinOde:2000,Braess2013,Rep:2008}.
In this section, we provide an estimate that is close to optimal with respect to the data computed solely on individual elements. We achieve this by combining several possibilities how to estimate various terms involved.

Let $V$ be given by \eqref{eq:defV} and let $\lambda_{*,n} > 0$ and $u_{*,n} \in V\setminus\{0\}$ be an arbitrary approximation of an eigenpair of \eqref{eq:EPstrong}. Further, let $\bq \in \Hdiv$ be an arbitrary vector field. Let us note that the quality of the resulting error bound depends heavily on an appropriate choice of $\bq$. A particular way how to compute suitable $\bq$ efficiently is described below in Section~\ref{se:flux}. However, the result presented in this section is valid for $\bq \in \Hdiv$ independently on the way it is constructed.

First, we introduce certain notation. Based on $\lambda_{*,n}$, $u_{*,n}$, $\bq$, we define flux $F$, residual $r$ in $\Omega$, and boundary residual $g$ on $\GammaN$ as
\begin{align}
  \nonumber
  F &= \cA \nabla u_{*,n} - \bq,
\\ \label{eq:Frg}  
  r &= c u_{*,n} - \lambda_{*,n} \beta_1 u_{*,n}  - \ddiv \bq,
\\ \nonumber
  g &= \alpha u_{*,n} - \lambda_{*,n} \beta_2 u_{*,n} + \bq \cdot \bn_\Omega.
\end{align}
Then, we compute their various norms for every element $K \in\cT_h$:
\begin{align}
  \label{eq:defFK}
  F_K &= \norm{\cA_K^{-1/2} F}_K, 
\\
  \label{eq:defr1K}
  r_{1,K} &= \left\{ \begin{array}{cl}
     \norm{c_K^{-1/2} r}_K & \text{ if } c_K > 0, \\
     \infty & \text{ if } c_K = 0,
     \end{array} \right.
\\
  \label{eq:defr2K}
  r_{2,K} &= \left\{ \begin{array}{cl}
     h_K\pi^{-1}(\lambda_{\cA_K}^\mathrm{min})^{-1/2} \norm{r}_K & \text{ if } \int_K r \dx = 0, \\
     \infty & \text{ otherwise, }
     \end{array} \right.
\\
  \label{eq:defr3K}
  r_{3,K} &= \left\{ \begin{array}{cl}
     \norm{\beta_{1,K}^{-1/2} r}_K & \text{ if } \beta_{1,K} > 0, \\
     \infty & \text{ if } \beta_{1,K} = 0.
     \end{array} \right.
\end{align}
Here, quantities $\cA_K$, $c_K$, and $\beta_{1,K}$ are constant values of coefficients $\cA$, $c$, and $\beta_1$ restricted to the element $K$, respectively.
Symbol $\norm{\cdot}_K$ stands for the usual norm in $L^2(K)$. In general, we adopt the notation $\norm{\cdot}_Q$ and $(\cdot,\cdot)_Q$ for the norm and the inner product in $L^2(Q)$, where $Q$ is a domain.
We also denote by
$\norm{\cdot}_{a,K}$ and $|\cdot|_{b,K}$ the local energy norm and the local $b$-seminorm, i.e.
$\norm{v}_{a,K}^2=(\cA_K \nabla v,\nabla v)_K + (c_K v, v)_K + (\alpha v, v)_{\partial K \cap\GammaN}$
and $|v|_{b,K}^2 = (\beta_{1,K} v, v)_K + (\beta_2 v, v)_{\partial K \cap\GammaN}$,
cf. \eqref{eq:blf} and \eqref{eq:blfb}.
We recall that $\lambda_{\cA_K}^\mathrm{min}$ stands for the smallest eigenvalue of the local coefficient matrix $\cA_K$ and $h_K$ for the diameter of the element $K$.

Further, we define similar quantities on those edges of the element $K \in \cT_h$ that lie on $\GammaN$.
We put $\cEN_K = \{ \gamma: \gamma \text{ is an edge of } K \text{ and } \gamma\subset\partial K \cap \GammaN \}$
and for all elements $K\in\cT_K$ and all edges $\gamma \in \cEN_K$ we define
\begin{align}
  \label{eq:defg1gamma}
  g_{1,\gamma} &= \left\{ \begin{array}{cl}
     \norm{\alpha_\gamma^{-1/2} g}_\gamma & \text{ if } \alpha_\gamma > 0, \\
     \infty & \text{ if } \alpha_\gamma = 0,
     \end{array} \right.
\\
  \label{eq:defg2gamma}
  g_{2,\gamma} &= \left\{ \begin{array}{cl}
     \min\left\{ C_K^{\cA_K}, \overline{C}_K^{\cA_K} \right\} \norm{g}_\gamma, & \text{ if } c_K>0 \text{ or } \int_\gamma g \dx = 0, \\
     \infty & \text{ otherwise},
     \end{array} \right.
\\
  \label{eq:defg3gamma}
  g_{3,\gamma} &= \left\{ \begin{array}{cl}
     \norm{\beta_{2,\gamma}^{-1/2} g}_\gamma & \text{ if } \beta_{2,\gamma} > 0, \\
     \infty & \text{ if } \beta_{2,\gamma} = 0.
     \end{array} \right.
\end{align}
Similarly as above, $\alpha_\gamma$ and $\beta_{2,\gamma}$ are the constant values of $\alpha$ and $\beta_2$ on edges $\gamma \in \cEN_K$.
Constants $C_K^{\cA_K}$ and $\overline{C}_K^{\cA_K}$ are given by simple modifications of \cite[Lemma 1]{AinVej2014corr,AinVej2014}
as
\begin{align*}
  \left(C_K^{\cA_K}\right)^2 &= \frac{|\gamma|}{d|K| c_K^{1/2}}
    \left( \frac{4h_K^2}{\lambda_{\cA_K}^\mathrm{min}} + \frac{d^2}{c_K} \right)^{1/2},
\\
%
  \left(\overline{C}_K^{\cA_K}\right)^2  &= \frac{|\gamma|}{d|K|}
    \overline{M}_K^{\cA_K}
    \left( \frac{2h_K}{\left(\lambda_{\cA_K}^\mathrm{min}\right)^{1/2}}
       + d\,\overline{M}_K^{\cA_K} \right),
\end{align*}
where $\overline{M}_K^{\cA_K} = \min\left\{ h_K \pi^{-1} \left(\lambda_{\cA_K}^\mathrm{min}\right)^{-1/2},  c_K^{-1/2} \right\}$
and $d=2$ stands for the dimension.
Note that we formally consider the possibility $C_K^{\cA_K} = \infty$ in the case of $c_K=0$
and $\overline{C}_K^{\cA_K} = \infty$ if $\int_\gamma g \dx \neq 0$.

To proceed, we introduce sets
$\cENO_K = \{ \gamma \in \cEN_K : \beta_{2,\gamma} = 0 \}$ and
$\cENP_K= \{ \gamma \in \cEN_K : \beta_{2,\gamma} > 0 \}$.
For all elements $K\in \cT_h$ we define
\begin{multline}
  \label{eq:defM}
  \MM_K = \min\left\{
    \left( F_K^2 + r_{1,K}^2 + \sum_{\gamma\in\cEN_K} g_{1,\gamma}^2 \right)^{1/2},\
    \left( F_K^2 + r_{1,K}^2 \right)^{1/2} + \sum_{\gamma\in\cEN_K} g_{2,\gamma},
  \right. \\ \left.
    \left( F_K^2 + \sum_{\gamma\in\cEN_K} g_{1,\gamma}^2 \right)^{1/2} + r_{2,K},\
    F_K + r_{2,K} + \sum_{\gamma\in\cEN_K} g_{2,\gamma}
  \right\}
\end{multline}
and also quantity $\MM_K^0$, which is given by \eqref{eq:defM} with $\cEN_K$ replaced by $\cENO_K$.
It is also useful to put
\begin{align}
  \label{eq:defLK}
  Y_K &= \min \left\{ \left( F_K^2 + \sum_{\gamma\in\cENO_K} g_{1,\gamma}^2\right)^{1/2},\
         F_K + \sum_{\gamma\in\cENO_K} g_{2,\gamma} \right\},
\\
  \label{eq:defRK}
  R_K &= \left( r_{3,K}^2 + \sum_{\gamma\in\cENP_K} g_{3,\gamma}^2 \right)^{1/2},
\\
  \label{eq:defGK}
  G_K &= \left( \sum_{\gamma \in \cENP_K} g_{3,\gamma}^2 \right)^{1/2}.
\end{align}

Finally, if a quantity $\underline{\lambda}_1$ such that $0 < \underline{\lambda}_1 \leq \lambda_1$ is available, we put
\begin{align*}
  \cT_h^{++} &= \{ K \in \cT_h: \beta_{1,K}>0 \text{ and } Y_K + \underline{\lambda}_1^{-1/2} R_K \leq \MM_K \},
\\
  \cT_h^{+0} &=
  \{ K \in \cT_h: \beta_{1,K}=0,\ \cENP_K \neq \emptyset,
  \text{ and } \underline{\lambda}_1^{-1/2} G_K + \MM_K^0 \leq \MM_K \},
\end{align*}
$\cT_h^+ = \cT_h^{++} \cup \cT_h^{+0}$ and $\cT_h^0 = \cT_h \setminus \cT_h^+$.
Note that if lower bound $\underline{\lambda}_1$ is not available, we formally set $\cT_h^{++} = \cT_h^{+0} = \emptyset$.
Using this notation, we formulate the following theorem.
\begin{theorem}
\label{th:estimator}
Let $\lambda_{*,n} \in \R$ and $u_{*,n} \in V \setminus \{0\}$ be arbitrary and
let $w_n\in V$ be given by \eqref{eq:defw}. Using the above notation, we have
\begin{equation}
\label{eq:eta}
  \norm{w_n}_a \leq \eta_n = \min \{ \eta_n^{(\mathrm{a})}, \eta_n^{(\mathrm{b})} \}
\end{equation}
where
\begin{multline*}
  \eta_n^{(\mathrm{a})} = \left( \sum_{K\in\cT_h} \MM_K^2 \right)^{1/2}, \quad
  \eta_n^{(\mathrm{b})} = \left[
  \sum_{K \in \cT_h^{++}} Y_K^2 + \sum_{K \in \cT_h^{+0}} (M_K^0)^2 + \sum_{K \in \cT_h^0} M_K^2
  \right]^{1/2}
\\  
  +
  \underline{\lambda}_1^{-1/2} \left[ \sum_{K \in \cT_h^{++}} R_K^2 + \sum_{K \in \cT_h^{+0}} G_K^2 \right]^{1/2}.
\end{multline*}
\end{theorem}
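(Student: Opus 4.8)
The plan is to turn the defining identity \eqref{eq:defw} for $w_n$ into an explicit linear functional, via an arbitrary $\bq\in\Hdiv$ and Green's formula, and then bound that functional through the dual characterization of the energy norm. Expanding $a(w_n,v)=a(u_{*,n},v)-\lambda_{*,n}b(u_{*,n},v)$ with the definitions \eqref{eq:blf}--\eqref{eq:blfb}, writing $(\cA\nabla u_{*,n},\nabla v)=(F,\nabla v)+(\bq,\nabla v)$ and integrating by parts in $(\bq,\nabla v)$ (legitimate since $\bq\in\Hdiv$ and $v=0$ on $\GammaD$, so the $\GammaD$-part of the boundary term drops out), I would collect the volume and boundary contributions into exactly the residuals of \eqref{eq:Frg} and obtain
\begin{equation*}
  a(w_n,v) = (F,\nabla v) + (r,v) + (g,v)_\GammaN \quad\forall v\in V.
\end{equation*}
Since $a$ is a scalar product and $w_n$ is its Riesz representative, $\norm{w_n}_a=\sup_{\norm{v}_a=1}a(w_n,v)$, so it suffices to bound the right-hand side by $\eta_n\norm{v}_a$ for every $v\in V$.

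Next I would localize, splitting the three integrals into element contributions $(F,\nabla v)_K$, $(r,v)_K$ and edge contributions $(g,v)_\gamma$, $\gamma\in\cEN_K$, and estimate each by Cauchy--Schwarz against one orthogonal component of $\norm{v}_{a,K}^2=\norm{\cA_K^{1/2}\nabla v}_K^2+\norm{c_K^{1/2}v}_K^2+\sum_\gamma\norm{\alpha_\gamma^{1/2}v}_\gamma^2$ or of the seminorm $|v|_{b,K}$. The flux gives $F_K\norm{\cA_K^{1/2}\nabla v}_K$. The residual admits three weightings: against $c_K^{1/2}v$ (yielding $r_{1,K}$, valid when $c_K>0$); against $\cA_K^{1/2}\nabla v$ after subtracting its mean and applying the Payne--Weinberger/Poincar\'e inequality with constant $h_K/\pi$ on the convex triangle $K$ (yielding $r_{2,K}$, valid when $\int_K r=0$); and against $\beta_{1,K}^{1/2}v$ (yielding $r_{3,K}$, valid when $\beta_{1,K}>0$, and landing in $|v|_{b,K}$). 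The boundary term is handled analogously by the $\alpha_\gamma$-weighting ($g_{1,\gamma}$), by a trace--Poincar\'e inequality with the constants $C_K^{\cA_K},\overline{C}_K^{\cA_K}$ quoted from \cite[Lemma 1]{AinVej2014corr,AinVej2014} whose applicability needs $c_K>0$ or $\int_\gamma g=0$ ($g_{2,\gamma}$), and by the $\beta_{2,\gamma}$-weighting ($g_{3,\gamma}$, landing in $|v|_{b,K}$). The $\infty$ conventions in \eqref{eq:defr1K}--\eqref{eq:defg3gamma} simply disable the inapplicable options.

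To assemble $\eta_n^{(\mathrm a)}$ I would, on each $K$, use only the weightings feeding the local $a$-norm. Because $\norm{\cA_K^{1/2}\nabla v}_K$, $\norm{c_K^{1/2}v}_K$ and the traces are mutually orthogonal components, terms estimated against distinct components combine by a single Cauchy--Schwarz into a root-sum-of-squares, whereas terms that reuse the gradient component (the Poincar\'e residual $r_{2,K}$) must be added through the triangle inequality; minimizing over the four admissible groupings gives exactly $\MM_K$, and a global Cauchy--Schwarz over $K$ yields $\norm{w_n}_a\le(\sum_K\MM_K^2)^{1/2}=\eta_n^{(\mathrm a)}$. For $\eta_n^{(\mathrm b)}$ I would route the reaction and $\beta_2$-boundary residuals into the seminorm where that route is available: on $K\in\cT_h^{++}$ bound the $a$-part by $Y_K\norm{v}_{a,K}$ and the $b$-part by $R_K|v|_{b,K}$, on $K\in\cT_h^{+0}$ by $M_K^0\norm{v}_{a,K}$ and $G_K|v|_{b,K}$, and on $K\in\cT_h^0$ by $\MM_K\norm{v}_{a,K}$; each such local estimate is a valid upper bound, the membership conditions merely singling out the elements where the seminorm route lowers the value. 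Two global Cauchy--Schwarz inequalities together with $|v|_b^2\le\lambda_1^{-1}\norm{v}_a^2\le\underline{\lambda}_1^{-1}\norm{v}_a^2$, which follows from \eqref{eq:a-norm} and \eqref{eq:Parseval}, then produce $\eta_n^{(\mathrm b)}$, and taking the minimum of the two valid bounds finishes the proof.

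I expect the main obstacle to be the bookkeeping of the many branches: checking that each grouping inside $\MM_K$ (and its $Y_K$, $M_K^0$ variants) is genuinely an upper bound through the correct mixture of Cauchy--Schwarz and triangle inequalities, and that the side conditions---mean-zero of $r$ on $K$, mean-zero of $g$ on $\gamma$, and positivity of $c_K,\beta_{1,K},\alpha_\gamma,\beta_{2,\gamma}$---are precisely the hypotheses guaranteeing the corresponding Poincar\'e and trace inequalities, so that the $\infty$ conventions correctly silence every invalid term.
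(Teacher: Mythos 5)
Your proposal is correct and follows essentially the same route as the paper: the paper simply tests \eqref{eq:defw} with $v=w_n$ to get $\norm{w_n}_a^2=(F,\nabla w_n)+(r,w_n)+(g,w_n)_\GammaN$ and divides by $\norm{w_n}_a$ at the end, which is equivalent to your dual-norm formulation, and the elementwise weighted Cauchy--Schwarz, Poincar\'e, and trace estimates, the four groupings defining $\MM_K$, and the final use of $|w_n|_b\le\underline{\lambda}_1^{-1/2}\norm{w_n}_a$ for $\eta_n^{(\mathrm{b})}$ all match the paper's argument. The bookkeeping you flag as the main obstacle is exactly what the paper carries out, with no additional ideas needed.
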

\begin{proof}
For brevity, we will write $w=w_n$ within this proof.
First, we prove the estimate $\norm{w}_a \leq \eta_n^{(\mathrm{a})}$.
Using \eqref{eq:defw*} and the divergence theorem, we easily obtain identity
\begin{equation}
  \label{eq:enormw}
  \norm{w}_a^2 = (F,\nabla w) + (r,w) + (g,w)_\GammaN
  = \sum_{K\in\cT_h} \cL_K,
\end{equation}
where $\cL_K = (F,\nabla w)_K + (r,w)_K + \sum_{\gamma\in\cEN_K} (g,w)_\gamma$.
The terms in $\cL_K$ can be estimated by the Cauchy--Schwarz inequality. We bound the first one as
\begin{equation}
  \label{eq:FwK}
  (F,\nabla w)_K =  (\cA^{-1/2} F, \cA^{1/2} \nabla w ) \leq \norm{\cA^{-1/2}F}_K \norm{\cA^{1/2}\nabla w}_K
    = F_K \norm{\nabla w}_{\cA,K},
\end{equation}
where we use the notation $\norm{\nabla w}_{\cA,K}^2 = (\cA \nabla w, \nabla w)_K$.
The second term can be bounded in three ways.
First, in those elements $K\in\cT_h$ where $c_K > 0$, we have
$(r,w)_K 
\leq c_K^{-1/2}\norm{r}_K \norm{ c_K^{1/2} w}_K$.
Second, in elements $K\in\cT_h$ where $\int_K r \dx = 0$, we can consider $\bar w_K = |K|^{-1} \int_K w \dx$,
Poincar\'e inequality $\norm{w - \bar w_K}_K \leq h_K \pi^{-1} \norm{\nabla w}_K$ \cite{Payne_Weinberger_opti_Poinc_ineq_conv_domains_1960}, the smallest eigenvalue
$\lambda_{\cA_K}^\mathrm{min}$ of $\cA_K$,
and derive estimate
$(r,w)_K = (r, w - \bar w_K)_K \leq \norm{r}_K \norm{w - \bar w_K}_K \leq h_K \pi^{-1} (\lambda_{\cA_K}^\mathrm{min})^{-1/2} \norm{r}_K \norm{\nabla w}_{\cA,K}$.
Third, in elements $K\in\cT_h$, where $\beta_{1,K} > 0$, we have
$(r,w)_K 
\leq \beta_{1,K}^{-1/2} \norm{r}_K \norm{ \beta_{1,K}^{1/2} w}_K$.
Thus, using definitions \eqref{eq:defr1K}--\eqref{eq:defr3K}, we obtain estimates
\begin{align}
  \label{eq:rwK1}
  (r,w)_K &\leq r_{1,K} \norm{ c_K^{1/2} w}_K, \\
  \label{eq:rwK2}
  (r,w)_K &\leq r_{2,K} \norm{\nabla w}_{\cA,K}, \\
  \label{eq:rwK3}
  (r,w)_K &\leq r_{3,K} \norm{ \beta_{1,K}^{1/2} w}_K
\end{align}
for all elements $K\in\cT_h$.

In a similar way, we estimate the third term.
First, on edges $\gamma \in \cEN_K$ where $\alpha_\gamma > 0$, we have
$(g,w)_\gamma 
\leq \alpha_\gamma^{-1/2} \norm{g}_\gamma \norm{\alpha_\gamma^{1/2} w}_\gamma$.
Second, on those edges $\gamma \in \cEN_K$ where $\int_\gamma g \dx = 0$, we can use the trace inequality \cite[Lemma 1]{AinVej2014corr} and obtain
$(g,w)_\gamma = (g, w- \bar w_\gamma)_\gamma \leq \norm{g}_\gamma \norm{w-\bar w_\gamma}_\gamma
               \leq \overline{C}_K^{\cA_K} \norm{g}_\gamma \norm{w}_{a,K}$,
where $\bar w_\gamma = |\gamma|^{-1} \int_\gamma w \dx$.
Third, if $c_K > 0$ and $\gamma \in \cEN_K$ then the trace inequality \cite[Lemma 1]{AinVej2014corr}
yields
$(g,w)_\gamma = \norm{g}_\gamma \norm{w}_\gamma \leq C_K^{\cA_K} \norm{g}_\gamma \norm{w}_{a,K}$.
Fourth, on edges $\gamma \in \cEN_K$ where $\beta_{2,\gamma} > 0$, we have
$(g,w)_\gamma 
               \leq \beta_{2,\gamma}^{-1/2} \norm{g}_\gamma \norm{\beta_{2,\gamma}^{1/2} w}_\gamma$.
Thus, using definitions \eqref{eq:defg1gamma}--\eqref{eq:defg3gamma} we obtain the following bounds
\begin{align}
  \label{eq:gwgamma1}
  (g,w)_\gamma &\leq g_{1,\gamma} \norm{\alpha_\gamma^{1/2} w}_\gamma, \\
  \label{eq:gwgamma2}
  (g,w)_\gamma &\leq g_{2,\gamma} \norm{w}_{a,K}, \\
  \label{eq:gwgamma3}
  (g,w)_\gamma &\leq g_{3,\gamma} \norm{\beta_{2,\gamma}^{1/2} w}_\gamma.
\end{align}

Now, we bound $\cL_K$ in four ways using various combinations of bounds \eqref{eq:FwK}, \eqref{eq:rwK1}--\eqref{eq:rwK2}, and \eqref{eq:gwgamma1}--\eqref{eq:gwgamma2}:
\begin{align*}
  \cL_K &\leq F_K \norm{\nabla w}_{\cA,K} + r_{1,K} \norm{ c_K^{1/2} w}_K
                   + \sum_{\gamma\in\cEN_K} g_{1,\gamma} \norm{\alpha_\gamma^{1/2} w}_\gamma,
\\
  \cL_K &\leq F_K \norm{\nabla w}_{\cA,K} + r_{1,K} \norm{ c_K^{1/2} w}_K
                   + \sum_{\gamma\in\cEN_K} g_{2,\gamma} \norm{w}_{a,K},
\\
  \cL_K &\leq F_K \norm{\nabla w}_{\cA,K} + r_{2,K} \norm{\nabla w}_{\cA,K}
                   + \sum_{\gamma\in\cEN_K} g_{1,\gamma} \norm{\alpha_\gamma^{1/2} w}_\gamma,
\\
  \cL_K &\leq F_K \norm{\nabla w}_{\cA,K} + r_{2,K} \norm{\nabla w}_{\cA,K}
                   + \sum_{\gamma\in\cEN_K} g_{2,\gamma} \norm{w}_{a,K}.
\end{align*}
Using the Cauchy-Schwarz inequality in these estimates, we conclude that
\begin{equation}
  \label{eq:MKest}
  \cL_K \leq M_K \norm{w}_{a,K}
\end{equation}
and consequently equality \eqref{eq:enormw} yields
$$
  \norm{w}_a^2 \leq \sum_{K\in\cT_h} M_K \norm{w}_{a,K} \leq \left( \sum_{K\in\cT_h} M_K^2 \right)^{1/2} \norm{w}_a,
$$
which readily provides the desired bound $\norm{w}_a \leq \eta_n^{(\mathrm{a})}$.

To prove $\norm{w}_a \leq \eta_n^{(\mathrm{b})}$, we first derive several auxiliary estimates using the same technique as above.
First, using \eqref{eq:defFK}, \eqref{eq:defg1gamma}, and \eqref{eq:defg2gamma}, we obtain
\begin{equation}
  \label{eq:LK}
  (F,\nabla w)_K + \sum_{\gamma \in \cENO_K} (g,w)_\gamma \leq Y_K \norm{w}_{a,K},
\end{equation}
where $Y_K$ is given by \eqref{eq:defLK}.
Similarly, by \eqref{eq:defr3K} and \eqref{eq:defg3gamma}, we have
\begin{equation}
  \label{eq:RK}
  (r,w)_K + \sum_{\gamma\in\cENP_K} (g,w)_\gamma \leq R_K |w|_{b,K},
\end{equation}
where $R_K$ is provided in \eqref{eq:defRK}.
Next, using the same approach as for \eqref{eq:MKest}, we derive the bound
\begin{equation}
  \label{eq:M0K}
  (F,\nabla w)_K + (r,w)_K + \sum_{\gamma \in \cENO_K} (g,w)_\gamma \leq \MM_K^0 \norm{w}_{a,K},
\end{equation}
where $\MM_K^0$ was introduced below \eqref{eq:defM}.
Finally, we use \eqref{eq:defg3gamma} and estimate
\begin{equation}
  \label{eq:GK}
  \sum_{\gamma \in \cENP_K} (g,w)_\gamma \leq G_K |w|_{b,K},
\end{equation}
where $G_K$ is defined in \eqref{eq:defGK}.

Thus, using estimates \eqref{eq:LK}, \eqref{eq:RK}, \eqref{eq:M0K}, and \eqref{eq:GK} in \eqref{eq:enormw}, we obtain
\begin{multline*}
\norm{w}_a^2 =
  \sum_{K \in \cT_h^{++}} \cL_K
  + \sum_{K \in \cT_h^{+0}} \cL_K
  + \sum_{K \in \cT_h^0} \cL_K
\leq
  \sum_{K \in \cT_h^{++}} \left[ Y_K \norm{w}_{a,K} 
\right. \\ \left.
    + R_K |w|_{b,K} \right]
  + \sum_{K \in \cT_h^{+0}} \left[ M_K^0 \norm{w}_{a,K} + G_K |w|_{b,K} \right]
  + \sum_{K \in \cT_h^0} M_K \norm{w}_{a,K}.
\end{multline*}
This can be further estimated using the Cauchy--Schwarz inequality as
\begin{multline*}
\norm{w}_a^2 \leq \left[
  \sum_{K \in \cT_h^{++}} Y_K^2 + \sum_{K \in \cT_h^{+0}} (M_K^0)^2 + \sum_{K \in \cT_h^0} M_K^2
  \right]^{1/2} \norm{w}_a
\\
  +
  \left[ \sum_{K \in \cT_h^{++}} R_K^2 + \sum_{K \in \cT_h^{+0}} G_K^2 \right]^{1/2} |w|_b
\end{multline*}
Since $ |w|_b \leq \lambda_1^{-1/2} \norm{w}_a \leq \underline{\lambda}_1^{-1/2} \norm{w}_a$,
the proof is finished.
\end{proof}

\section{Local flux reconstruction}
\label{se:flux}

In this section, we describe a local procedure how to construct a suitable flux reconstruction $\bq \in \Hdiv$ needed to evaluate $\eta_n$ in Theorem~\ref{th:estimator}.
Specifically, we use the flux reconstruction proposed in \cite{BraSch:2008} and generalize it to the eigenvalue problem \eqref{eq:EPweak}.
The description of the local flux reconstruction is technical and it is inspired mainly by works \cite{Ern_Voh_adpt_IN_13} and \cite{Dol_Ern_Vohr_hp_refinement_strategies_polyn_rob_AEE}.
We will denote the computable flux reconstruction by $\tqT$ in order to distinguish it from an arbitrary element $\bq \in \Hdiv$.
The flux reconstruction $\tqT$ is computed as a reconstruction of the already computed approximate gradient $\nabla u_{h,n}$, where $u_{h,n} \in V\setminus\{0\}$ is given by \eqref{eq:EPFEM}. 

The flux reconstruction $\tqT$ is naturally defined in the Raviart--Thomas finite element spaces, see e.g.~\cite{BreFor:1991} and \cite{Quar_Val_Num_appr_PDE_94}.
The Raviart--Thomas space of order $p$ is defined on the mesh $\cT_h$ as
\begin{equation}\label{RTN}
\bWT = \left\{  \bwT \in \Hdiv:
    \bwT|_K \in \RT(K) \quad \forall K \in \cT_h
  \right \},
\end{equation}
where $\RT(K) = [P_p(K)]^2 \oplus \bfx P_p(K)$ and $\bfx = (x_1,x_2)$ is the vector of coordinates.

We introduce the notation for vertices (nodes) of the mesh $\cT_h$.
Let $\NT$ denote the set of all vertices in $\cT_h$. The subsets of those lying on $\oGammaD$, on $\GammaN$, and in the interior of $\Omega$ are denoted by $\NTD$, $\NTN$, and $\NTI$, respectively. Notice that if a vertex is located at the interface between the Dirichlet and Neumann boundary, it is not in $\NTN$, but only in $\NTD$.
We also denote by $\NTK$ and $\cETK$ the sets of three vertices and three edges of the element $K$, respectively.

We construct the flux reconstruction $\tqT \in \bWT$ by solving local Neumann and Neumann/Dirichlet mixed finite element problems defined on patches of elements sharing a given vertex.
Let $\ba \in \NT$ be an arbitrary vertex, we denote by $\HF$ the standard piecewise linear and continuous hat function associated with $\ba$. This function vanishes at all vertices of $\cT_h$ except of $\ba$, where it has value 1. Note that $\HF \in \VT$ for vertices $\ba \in \NTI \cup \NTN$, but $\HF \not\in \VT$ for $\ba \in \NTD$.
Further, let $\Ta = \{ K \in \cT_h : \ba \in K \}$ be the set of elements sharing the vertex~$\ba$ and
$\Oma = \interior\bigcup \{ K: K \in \Ta \}$ the patch of elements sharing the vertex $\ba$.
We denote by $\cEaI$ the set of interior edges in the patch $\Oma$,
by $\cEaBE$ the set of those edges on the boundary $\poma$ that do not contain $\ba$,
and by $\cEaBD$ and $\cEaBN$ the sets of edges on the boundary $\poma$ with an end point at $\ba$ lying either on $\GammaD$ or on $\GammaN$, respectively. Note that sets $\cEaBD$ and $\cEaBN$ can be nonempty only if $\ba \in \NTD \cup \NTN$, i.e. for boundary patches.

We introduce auxiliary quantities
\begin{align}
\label{eq:ra}
\rTa &= c \HF u_{h,n} - \lambda_{h,n} \beta_1 \HF  u_{h,n} +  (\cA \nabla \HF) \cdot \nabla u_{h,n},
\\
\label{eq:ga}
\gTa &= \alpha \HF u_{h,n} - \lambda_{h,n} \beta_2 \HF u_{h,n} .
\end{align}
Note that these quantities are defined in such a way that
\begin{equation}
  \label{eq:ragN}
  a(u_{h,n}, \HF) - \lambda_{h,n} b(u_{h,n},\HF) = \int_{\Oma} \rTa \dx + \sum_{\gamma\in\cEaBN} \int_\gamma \gTa \dx[s]
  \quad\forall \ba\in\NT.
\end{equation}

The local flux reconstruction is defined in the Raviart--Thomas spaces on patches $\Oma$ with suitable
boundary conditions. We introduce the space
\begin{multline}
\label{eq:Wa0}
\bWa^0 = \left\{
    \bwT \in \Hdiv[\Oma] : \bwT|_K \in \RT(K) \quad\forall K \in \Ta
    \right. \\ \left.
    \quad\text{and}\quad
    \bwT \cdot \bn_{\gamma} =
      0 \text{ on edges } \gamma \in \cEaBE \cup \cEaBN
  \right\}
\end{multline}
and the affine set
\begin{multline}
  \label{eq:Wa}
  \bWa = \left\{
    \bwT \in \Hdiv[\Oma] : \bwT|_K \in \RT(K) \quad\forall K \in \Ta, \quad
    \bwT \cdot \bn_{\gamma} = 0
    \right. \\ \left.
    \text{ on edges } \gamma \in \cEaBE
    \text{ and }
    \bwT \cdot \bn_{\gamma} = -\Pi_{\gamma}(\gTa) \text{ on edges } \gamma \in \cEaBN
  \right\}.
\end{multline}
The symbol $\Pi_{\gamma}$ stands for the $L^2(\gamma)$-orthogonal projection onto the space $P_p(\gamma)$ of polynomials of degree at most $p$ on the edge $\gamma$.
We also define the space $P_p(\Ta) = \{ \vT \in L^2(\Oma) : \vT|_K \in P_p(K) \ \forall K\in\Ta \}$ of piecewise polynomial and in general discontinuous functions.
Further, we introduce the space
\begin{equation} \label{eq:Ppast}
  \PpTast = \left\{ \begin{array}{ll}
    \{ \vT \in P_p(\Ta): \int_{\Oma} \vT \dx = 0 \}, \quad \text{for} \ \ba \in \NTI \cup \NTN, \\
    P_p(\Ta) , \quad \text{for} \ \ba \in \NTD.
    \end{array} \right.
\end{equation}

Using these spaces, we define the flux reconstruction $\tqT \in \bWT$ as the sum
\begin{equation} \label{eq:defq}
  \tqT = \sum_{\ba \in \NT} \tqa,
\end{equation}
where $\tqa \in \bWa$ together with $\da \in \PpTast$ solves the mixed finite element problem
\begin{alignat}{2}
       (\cA^{-1} \tqa, \bwT)_{\Oma} - (\da, \ddiv \bwT)_{\Oma} &= (\HF \nabla u_{h,n}, \bwT)_{\Oma}
         &\quad& \forall \bwT \in \bWa^0, \label{min_prob_qa_1}\\
       (\ddiv \tqa, \vT)_{\Oma} &= (\rTa, \vT)_{\Oma} &\quad &\forall \vT \in \PpTast. \label{min_prob_qa_2}
\end{alignat}
Let us note that this mixed finite element problem is equivalent to the minimization of
$\left\|\HF \cA^{\frac{1}{2}} \nabla u_{h,n} - \cA^{-\frac{1}{2}} \tsa \right\|_{\Oma}$
over all $\tsa \in \bWa$ satisfying the constraint $\ddiv \tsa = \Pi_{p}(\rTa)$ in $\Oma$,
where $\Pi_{p}$ denotes the $L^2(\Oma)$-orthogonal projection onto $P_p(\Ta)$.

The flux reconstruction \eqref{eq:defq} is defined in such a way that quantities $r$ and $g$ vanish. We prove this fact below in Lemma~\ref{le:etaR=0}. However, first, we need to prove that identity \eqref{min_prob_qa_2} can be actually tested by any polynomial.
\begin{lemma}
Let $\tqa \in \bWa$ and $\da \in \PpTast$ be a solution of problem \eqref{min_prob_qa_1}--\eqref{min_prob_qa_2}. Then
\begin{equation}
\label{eq:testPp}
 (\ddiv \tqa, \vT)_{\Oma} = (\rTa, \vT)_{\Oma} \quad \forall \vT \in \PpT.
\end{equation}
\end{lemma}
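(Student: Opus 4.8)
The statement to prove is that the mixed finite element problem, which by construction is tested only against $\vT \in \PpTast$, actually holds when tested against any $\vT \in \PpT$. The only nontrivial case is $\ba \in \NTI \cup \NTN$, where $\PpTast$ is the subspace of $\PpT$ consisting of functions with zero mean over $\Oma$. For $\ba \in \NTD$ we have $\PpTast = \PpT$ and there is nothing to prove.

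The plan is as follows. For $\ba \in \NTI \cup \NTN$, any $\vT \in \PpT$ decomposes as $\vT = \vT^0 + \bar v$, where $\vT^0 \in \PpTast$ has zero mean and $\bar v = |\Oma|^{-1} \int_{\Oma} \vT \dx$ is the constant equal to its average. Equation \eqref{min_prob_qa_2} already gives the identity for $\vT^0$, so by linearity it suffices to verify $(\ddiv \tqa, 1)_{\Oma} = (\rTa, 1)_{\Oma}$, i.e. that the identity holds for the constant test function $\vT \equiv 1$. First I would compute the left-hand side using the divergence theorem: $(\ddiv \tqa, 1)_{\Oma} = \int_{\poma} \tqa \cdot \bn \dx[s]$. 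The boundary $\poma$ splits into the edges in $\cEaBE$, $\cEaBN$, and (for $\ba \in \NTN$) the edges in $\cEaBD$. On $\cEaBE$ the normal trace of $\tqa$ vanishes by definition of $\bWa$ in \eqref{eq:Wa}. For a purely interior vertex $\ba \in \NTI$ there are no other boundary edges, so the boundary integral reduces to the contribution over $\cEaBN$, which by the prescribed normal condition $\tqa \cdot \bn_\gamma = -\Pi_\gamma(\gTa)$ equals $-\sum_{\gamma \in \cEaBN} \int_\gamma \gTa \dx[s]$ (the projection disappears against the constant $1$).

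Next I would match this against the right-hand side $(\rTa, 1)_{\Oma} = \int_{\Oma} \rTa \dx$. Here the crucial ingredient is the compatibility identity \eqref{eq:ragN}, which states that
$$
  \int_{\Oma} \rTa \dx + \sum_{\gamma\in\cEaBN} \int_\gamma \gTa \dx[s]
  = a(u_{h,n},\HF) - \lambda_{h,n} b(u_{h,n},\HF).
$$
For $\ba \in \NTI \cup \NTN$ the hat function $\HF$ lies in $\VT$, so it is an admissible test function in the discrete eigenvalue problem \eqref{eq:EPFEM}, whence the right-hand side of \eqref{eq:ragN} vanishes. Therefore $\int_{\Oma} \rTa \dx = -\sum_{\gamma \in \cEaBN}\int_\gamma \gTa \dx[s]$, which is exactly the value of the boundary integral computed above. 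Thus $(\ddiv \tqa, 1)_{\Oma} = (\rTa, 1)_{\Oma}$, completing the zero-mean-complement argument and establishing \eqref{eq:testPp}.

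The main obstacle, and the point requiring care, is handling the boundary edges correctly for the Neumann vertex case $\ba \in \NTN$: one must confirm that the edges in $\cEaBD$ carry no constraint from $\bWa$ yet still contribute nothing, which works because such a vertex lies on $\GammaN$ and the Dirichlet-type edges $\cEaBD$ are empty for $\ba \in \NTN$ (a Neumann vertex has no Dirichlet edge emanating from it). The bookkeeping of which edge sets are nonempty for which vertex type, together with the clean vanishing of the projection $\Pi_\gamma$ against constants and the vanishing of the Galerkin residual via $\HF \in \VT$, is the heart of the argument; everything else is routine linearity and the divergence theorem.
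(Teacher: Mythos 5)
Your proof is correct and follows essentially the same route as the paper: reduce to the constant test function via the mean-value decomposition, use $\HF\in\VT$ in \eqref{eq:EPFEM} together with \eqref{eq:ragN} to evaluate $(\rTa,1)_{\Oma}$, and match it with $(\ddiv\tqa,1)_{\Oma}$ through the divergence theorem and the normal-trace conditions in \eqref{eq:Wa}. The paper's proof is just a terser version of the same chain of equalities, so there is nothing to add.
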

\begin{proof}
Notice that if $\ba \in \NTD$ then there is nothing to prove due to definition~\eqref{eq:Ppast}.
If $\ba \in \NTI \cup \NTN$ then we can use $\HF$ as a test function in \eqref{eq:EPFEM}.
Consequently, identity \eqref{eq:ragN} and definition \eqref{eq:Wa} imply
\begin{equation}
\label{eq:Neumequilib}
  (\rTa,1)_{\Oma}
  = -\sum_{\gamma\in\cEaBN}(\gTa, 1)_\gamma
  = (\tqa \cdot \bn_{\partial\Oma}, 1)_{\partial\Oma}
  = (\ddiv \tqa, 1)_{\Oma}.
\end{equation}
\end{proof}

Let us note that for vertices $\ba \in \NTI \cup \NTN$ the problem~\eqref{min_prob_qa_1}--\eqref{min_prob_qa_2}
corresponds to a pure Neumann problem for $\da$.
This problem is solvable, because the corresponding equilibrium condition is exactly \eqref{eq:Neumequilib}.
In addition, its solution is unique thanks to the fact that
the space $\PpTast$ does not contain constant functions. For $\ba \in \NTD$, the problem~\eqref{min_prob_qa_1}--\eqref{min_prob_qa_2} corresponds to a well posed Dirichlet--Neumann problem
and the space $\PpTast$ contains constant functions.
The existence and uniqueness of problem~\eqref{min_prob_qa_1}--\eqref{min_prob_qa_2} can be rigorously proved in the same way as in \cite{Voh_un_apr_apost_MFE_10}.


Finally, we present the result that quantities $r$ and $g$, see \eqref{eq:Frg}, vanish for the described flux reconstruction $\tqT$.
\begin{lemma}
\label{le:etaR=0}
Let $\tqT \in \bWT$ be given by \eqref{eq:defq} and the problem \eqref{min_prob_qa_1}--\eqref{min_prob_qa_2}.
Then
\begin{align}
 \label{eq:etaR=0}
 c u_{h,n} - \lambda_{h,n} \beta_1 u_{h,n} - \ddiv \tqT &= 0
 \quad\text{a.e. in }\Omega,
 \\
 \label{eq:etaN=0}
 \alpha u_{h,n} - \lambda_{h,n} \beta_2 u_{h,n} + \tqT \cdot \bn_\Omega &= 0
 \quad\text{a.e. on }\GammaN.
\end{align}
\end{lemma}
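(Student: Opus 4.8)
The plan is to prove the two identities \eqref{eq:etaR=0} and \eqref{eq:etaN=0} separately. In each case the key structural observation is that the residual in question is, \emph{element-wise} (resp.\ \emph{edge-wise}), already a polynomial of degree at most $p$, so it suffices to show that its $L^2$-projection onto the corresponding polynomial space vanishes. The two mechanisms producing the cancellations are the partition-of-unity identities $\sum_{\ba\in\NTK}\HF=1$ and $\sum_{\ba\in\NTK}\nabla\HF=0$ on each element $K$ (and $\sum\HF=1$ along each edge), combined with the testing identity \eqref{eq:testPp} and the definition \eqref{eq:defq}.

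For the interior residual \eqref{eq:etaR=0}, I would first note that, since $c,\beta_1$ are piecewise constant, $u_{h,n}|_K\in P_p(K)$, and $\ddiv\RT(K)\subseteq P_p(K)$, the function $c u_{h,n}-\lambda_{h,n}\beta_1 u_{h,n}-\ddiv\tqT$ restricted to any $K\in\cT_h$ lies in $P_p(K)$; hence it is enough to show it is $L^2(K)$-orthogonal to $P_p(K)$. Fix $K$ and $\vT\in P_p(K)$ and extend $\vT$ by zero outside $K$; for every vertex $\ba\in\NTK$ this extension belongs to $\PpT$ on the patch $\Oma$. Because only the contributions $\tqa$ with $\ba\in\NTK$ are supported on $K$, summing \eqref{eq:testPp} over $\ba\in\NTK$ gives
$$
(\ddiv\tqT,\vT)_K=\sum_{\ba\in\NTK}(\rTa,\vT)_K=\Big(\sum_{\ba\in\NTK}\rTa,\vT\Big)_K .
$$
Inserting \eqref{eq:ra} and using $\sum_{\ba\in\NTK}\HF=1$ and $\sum_{\ba\in\NTK}\nabla\HF=0$ on $K$ collapses $\sum_{\ba\in\NTK}\rTa$ to $c u_{h,n}-\lambda_{h,n}\beta_1 u_{h,n}$. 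Thus the residual is orthogonal to all of $P_p(K)$; being itself in $P_p(K)$, it vanishes on $K$, and since $K$ was arbitrary, \eqref{eq:etaR=0} follows.

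For the boundary residual \eqref{eq:etaN=0}, I would argue edge by edge. Fix $\gamma\subset\GammaN$, belonging to a single element $K$ with vertices $\ba_1,\ba_2$ (the endpoints of $\gamma$) and $\ba_3$. As $\alpha,\beta_2$ are piecewise constant and the trace of $u_{h,n}$ on $\gamma$ is in $P_p(\gamma)$, the quantity $\alpha u_{h,n}-\lambda_{h,n}\beta_2 u_{h,n}$ already lies in $P_p(\gamma)$, so $\Pi_\gamma$ acts as the identity on it. Only $\tqa$ for $\ba\in\NTK$ reach $\gamma$; by the boundary conditions in \eqref{eq:Wa} one has $\gamma\in\cEaBN$ for $\ba\in\{\ba_1,\ba_2\}$, whence $\tqa\cdot\bn_\gamma=-\Pi_\gamma(\gTa)$, while $\gamma\in\cEaBE$ for $\ba_3$, whence $\b{q}_h^{\ba_3}\cdot\bn_\gamma=0$. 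Summing the three contributions, using linearity of $\Pi_\gamma$ and $\psi_{\ba_1}+\psi_{\ba_2}=1$ on $\gamma$, yields $\tqT\cdot\bn_\Omega=-\Pi_\gamma(\alpha u_{h,n}-\lambda_{h,n}\beta_2 u_{h,n})=-(\alpha u_{h,n}-\lambda_{h,n}\beta_2 u_{h,n})$ on $\gamma$, which is exactly \eqref{eq:etaN=0}.

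The step I expect to require the most care, and which is the main obstacle, is the boundary bookkeeping: one must verify that for every $\gamma\subset\GammaN$ both endpoint patches genuinely classify $\gamma$ into $\cEaBN$ (including the delicate case of an interface endpoint, which lies in $\NTD$ rather than $\NTN$ yet still sees $\gamma$ on $\GammaN$), while the opposite-vertex patch classifies it into $\cEaBE$, and that the normal $\bn_\gamma$ is oriented consistently with $\bn_\Omega$ across all three patches so the normal traces add with the correct signs. The remaining manipulations are routine algebra driven by the partition-of-unity identities.
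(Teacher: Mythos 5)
Your proposal is correct and follows essentially the same route as the paper's proof: both rest on the extended testing identity \eqref{eq:testPp}, the partition-of-unity identities $\sum_{\ba\in\NTK}\HF=1$ and $\sum_{\ba\in\NTK}\nabla\HF=0$, and the prescribed normal traces $-\Pi_\gamma(\gTa)$ in \eqref{eq:Wa}. The only (cosmetic) difference is that you show the element/edge-wise polynomial residual is orthogonal to the whole local polynomial space, whereas the paper tests it directly against itself and shows its norm vanishes.
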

\begin{proof}
   Let us set $r_h = c u_{h,n} - \lambda_{h,n} \beta_1 u_{h,n} - \ddiv \tqT$.
   Clearly, $r_h|_K \in P_p(K)$ for all $K\in\cT_h$.
   Using the decomposition of unity $\sum_{\ba \in \NTK} \HF = 1$, notation \eqref{eq:ra},
   and equality \eqref{eq:testPp}, we obtain
   \begin{align*} 
      \norm{r_h}^2
      &= \sum_{\ba \in \NTK} \left( c \HF u_{h,n} - \lambda_{h,n} \beta_1 \HF u_{h,n} - \ddiv \tqa,
               r_h \right)_{\Oma}
      \\
      &= \sum_{\ba \in \NTK} \left( \rTa - (\cA \nabla\HF)\cdot\nabla u_{h,n} - \ddiv \tqa,
               r_h \right)_{\Oma}
      \\               
      &= - \sum_{\ba \in \NTK} \left( (\cA \nabla\HF)\cdot \nabla u_{h,n}, r_h \right)_{\Oma} = 0.
   \end{align*}
   Thus, $r_h$ vanishes almost everywhere in $\Omega$.

  To prove the second statement, we set $g_h =  \alpha u_{h,n} - \lambda_{h,n} \beta_2 u_{h,n} + \tqT \cdot \bn_\Omega$. Let $\gamma$ be an arbitrary edge on the Neumann boundary. Clearly, $g_h|_\gamma \in P_p(\gamma)$.
  The decomposition of unity $\sum_{\ba \in \NTGa} \HF = 1$ with $\NTGa$ being the set of the two end-points of the edge $\gamma$ and definitions~\eqref{eq:Wa} and \eqref{eq:ga} then give
  \begin{multline}\label{eval_trace_Neum_indic}
    \norm{ g_h }_\gamma^2
    = \sum_{\ba \in \NTGa} (\alpha \HF u_{h,n} - \lambda_{h,n} \beta_2 \HF u_{h,n} + \tqa {\cdot} \bn_\gamma, g_h)_\gamma
    \\
    = \sum_{\ba \in \NTGa} (\alpha \HF u_{h,n} - \lambda_{h,n} \beta_2 \HF u_{h,n} - \Pi_\gamma(\gTa), g_h)_\gamma
    = 0.
  \end{multline}
  Thus, $g_h$ vanishes almost everywhere on $\gamma$ and, hence, on $\GammaN$.
\end{proof}

The following corollary summarizes the fact that properties \eqref{eq:etaR=0} and \eqref{eq:etaN=0} of the flux reconstruction $\tqT$ simplify
the error estimator presented in Theorem~\ref{th:estimator} considerably.

\begin{corollary}
\label{co:normw}
Let the flux reconstruction $\tqT \in \bWT$ be given by \eqref{eq:defq} and problem \eqref{min_prob_qa_1}--\eqref{min_prob_qa_2}.
Let $\lambda_{h,n} \in \R$ and $u_{h,n} \in \VT\setminus\{0\}$ satisfy \eqref{eq:EPFEM}.
Further, let $w_n \in V$ be given by \eqref{eq:defw} with bilinear forms defined in \eqref{eq:blf}--\eqref{eq:blfb}
and with $\lambda_{*,n} = \lambda_{h,n}$ and $u_{*,n} = u_{h,n}$.
Then
\begin{equation}
  \label{eq:etasimp}
  \norm{w_n}_a \leq \eta_n,
  \quad\text{where}\quad
  \eta_n^2 = \sum_{K\in\cT_h} F_K^2 = \norm{\cA^{1/2} \left( \nabla u_{h,n} - \cA^{-1} \tqT \right) }_{L^2(\Omega)}^2.
\end{equation}
\end{corollary}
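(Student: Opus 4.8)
The plan is to specialize Theorem~\ref{th:estimator} to the data $\lambda_{*,n} = \lambda_{h,n}$, $u_{*,n} = u_{h,n}$, and $\bq = \tqT$, and then to exploit Lemma~\ref{le:etaR=0} to make every term of the estimator except the flux term collapse. With this choice, the quantities of \eqref{eq:Frg} read $F = \cA\nabla u_{h,n} - \tqT$, $r = c u_{h,n} - \lambda_{h,n}\beta_1 u_{h,n} - \ddiv\tqT$, and $g = \alpha u_{h,n} - \lambda_{h,n}\beta_2 u_{h,n} + \tqT\cdot\bn_\Omega$. Lemma~\ref{le:etaR=0} states precisely that $r = 0$ a.e.\ in $\Omega$ and $g = 0$ a.e.\ on $\GammaN$. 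It therefore suffices to track how the vanishing of $r$ and $g$ simplifies the bound $\eta_n^{(\mathrm{a})} = (\sum_{K\in\cT_h} \MM_K^2)^{1/2}$ furnished by Theorem~\ref{th:estimator}.

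First I would note that $r = 0$ gives $\int_K r \dx = 0$ on every element, so by \eqref{eq:defr2K} we have $r_{2,K} = 0$; likewise $g = 0$ gives $\int_\gamma g \dx[s] = 0$ on every Neumann edge, so the finiteness condition in \eqref{eq:defg2gamma} is satisfied and $g_{2,\gamma} = 0$. Substituting $r_{2,K} = 0$ and $g_{2,\gamma} = 0$ into the fourth entry $F_K + r_{2,K} + \sum_{\gamma} g_{2,\gamma}$ of the minimum \eqref{eq:defM} defining $\MM_K$ shows that this entry equals exactly $F_K$. Since each of the four entries of the minimum is bounded below by $F_K$ (each being either a norm dominating $F_K$ or $F_K$ plus nonnegative contributions), we conclude $\MM_K = F_K$ for every $K \in \cT_h$. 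Hence $\eta_n^{(\mathrm{a})} = (\sum_{K\in\cT_h} F_K^2)^{1/2}$, and Theorem~\ref{th:estimator} gives $\norm{w_n}_a \leq \eta_n^{(\mathrm{a})}$, which is precisely the bound $\eta_n$ claimed in \eqref{eq:etasimp}.

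It then remains to recast $\sum_K F_K^2$ as the stated global weighted norm. Using that $\cA$ is constant on each element, the pointwise identity $\cA_K^{-1/2}(\cA_K\nabla u_{h,n} - \tqT) = \cA_K^{1/2}(\nabla u_{h,n} - \cA_K^{-1}\tqT)$ on $K$ yields $F_K^2 = \norm{\cA^{1/2}(\nabla u_{h,n} - \cA^{-1}\tqT)}_K^2$, and summation over $K\in\cT_h$ produces the $L^2(\Omega)$ expression in \eqref{eq:etasimp}. I do not anticipate any genuine obstacle: all the substance is carried by Lemma~\ref{le:etaR=0}, and the only point deserving care is checking that the vanishing of $r$ and $g$ activates exactly the zero-mean branches of \eqref{eq:defr2K} and \eqref{eq:defg2gamma}, so that the robust fourth term of \eqref{eq:defM} reduces to $F_K$ irrespective of whether any of $c_K$, $\alpha_\gamma$, $\beta_{1,K}$, or $\beta_{2,\gamma}$ happens to be zero.
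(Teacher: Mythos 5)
Your proposal is correct and follows exactly the route of the paper, whose proof is the one-line observation that the claim "follows immediately from Theorem~\ref{th:estimator} and properties \eqref{eq:etaR=0} and \eqref{eq:etaN=0}"; you have merely written out the bookkeeping (that $r=0$ and $g=0$ force $r_{2,K}=0$ and $g_{2,\gamma}=0$, hence $\MM_K=F_K$, and that $F_K$ rewrites as the stated weighted $L^2$ norm), all of which checks out.
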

\begin{proof}
The statement follows immediately from Theorem~\ref{th:estimator} and properties \eqref{eq:etaR=0} and \eqref{eq:etaN=0}.
\end{proof}

\begin{remark}
In floating-point arithmetics, we cannot solve problem \eqref{min_prob_qa_1}--\eqref{min_prob_qa_2} exactly due to round-off errors.
Consequently, hypotheses of Corollary~\ref{co:normw} are not satisfied, and the bound \eqref{eq:etasimp} is not guaranteed.
The point is that quantities $r$ and $g$ do not vanish exactly in this case.
In order to overcome this issue, we recommend to use Theorem~\ref{th:estimator} with $\bq = \tqT$.
For example, if $r\neq 0$, $c = 0$, and $\beta_1 > 0$ in $\Omega$ then $r_{1,K}=\infty$, $r_{2,K}=\infty$, and $r_{3,K} < \infty$ for all $K\in\cT_h$. Consequently, $M_K=\infty$, $\cT_h^{++} = \cT_h$, and $\cT_h^{+0}=\cT_h^{0} = \emptyset$. If we consider for simplicity $\GammaN = \emptyset$ then $Y_K = F_K$, 
$R_K = r_{3,K}$, 
and estimator \eqref{eq:eta} reduces to
\begin{equation}
\label{eq:etaguar}
  \eta_n^2 = \sum_{K\in\cT_h} F_K^2
  + \underline{\lambda}_1^{-1/2} \sum_{K\in\cT_h} r_{3,K}^2.
\end{equation}
This estimator provides a guaranteed upper bound on $\norm{w_n}_a$ even if $r$ does not equal to zero exactly.
However, it has to be said that in practical computations the quantity $r$ and, thus, all quantities $r_{3,K}$ are typically on the level of machine precision. Therefore, 
the difference of estimators given by \eqref{eq:etasimp} and \eqref{eq:etaguar} is often negligible.
\end{remark}

\section{Numerical example in dumbbell shape domain}
\label{se:numex}


This section illustrates the numerical performance of lower bounds $\ell_n$ and $L_n$ given by
\eqref{eq:Wein} and \eqref{eq:Kato}, respectively.
These bounds depend on quantities $\eta_i$, which are guaranteed bounds on representatives of residuals, see \eqref{eq:abscompl}.
We compute these quantities by the local flux reconstruction procedure described in Section~\ref{se:flux}.

We will compute two-sided bounds on the first ten eigenvalues of the Laplacian in a dumbbell shaped domain \cite{TreBet2006} with homogeneous Dirichlet boundary conditions.
Thus, we consider problem \eqref{eq:EPstrong} in the domain $\Omega$ showed in Figure~\ref{fi:dumbbell} (left)
with $\GammaN = \emptyset$, $\cA$ being the identity matrix, and with constant coefficients $c=0$, $\alpha = 0$, $\beta_1 = 1$, $\beta_2 = 0$.
We solve this problem by the standard conforming finite element method with piecewise linear and continuous trial and test functions, i.e., we consider the finite element space \eqref{eq:Vh} with $p=1$ and the finite element approximation given by \eqref{eq:EPFEM}.
The computed eigenvalues $\lambda_{h,n}$ are upper bounds on the corresponding exact eigenvalues.
Lower bounds are computed in two ways. First, we use lower bound $L_n$ only and
the quantity $\nu$ satisfying \eqref{eq:nucond} is obtained by the homotopy method.
Second, we combine both lower bounds $\ell_n$ and $L_n$.

\tikzset{
  big arrowhead/.style={
    decoration={markings, mark=at position 1 with {\arrow[scale=1.5,thin,black]{>}} },
    postaction={decorate},
    shorten >=0.4pt}}
\begin{figure}
\begin{center}
\begin{tikzpicture}[scale=0.6]
\draw [thick] (0,0)--(4,0)--(4,1.5)--(5,1.5)--(5,0)--(9,0);
\draw [thick] (9,0)--(9,4)--(5,4)--(5,2.5)--(4,2.5)--(4,4)--(0,4)--(0,0);
\draw [big arrowhead, thin] (0,3.5)--(4,3.5);
\draw [big arrowhead, thin] (4,3.5)--(0,3.5);
\node [below] at (2.5,3.5) {$\pi$};
\draw [big arrowhead, thin] (4,3.5)--(5,3.5);
\draw [big arrowhead, thin] (5,3.5)--(4,3.5);
\node [above] at (4.5,3.5) {$\displaystyle\frac{\pi}{4}$};
\draw [big arrowhead, thin] (5,3.5)--(9,3.5);
\draw [big arrowhead, thin] (9,3.5)--(5,3.5);
\node [below] at (7,3.5) {$\pi$};
\draw [big arrowhead, thin] (0.5,0)--(0.5,4);
\draw [big arrowhead, thin] (0.5,4)--(0.5,0);
\node [right] at (0.5,2) {$\pi$};
\draw [big arrowhead, thin] (4.8,1.5)--(4.8,2.5);
\draw [big arrowhead, thin] (4.8,2.5)--(4.8,1.5);
\node [right] at (4.8,2) {$\displaystyle\frac{\pi}{4}$};
\end{tikzpicture}
\qquad
\raisebox{-0.2mm}{\includegraphics[width=0.460\textwidth]{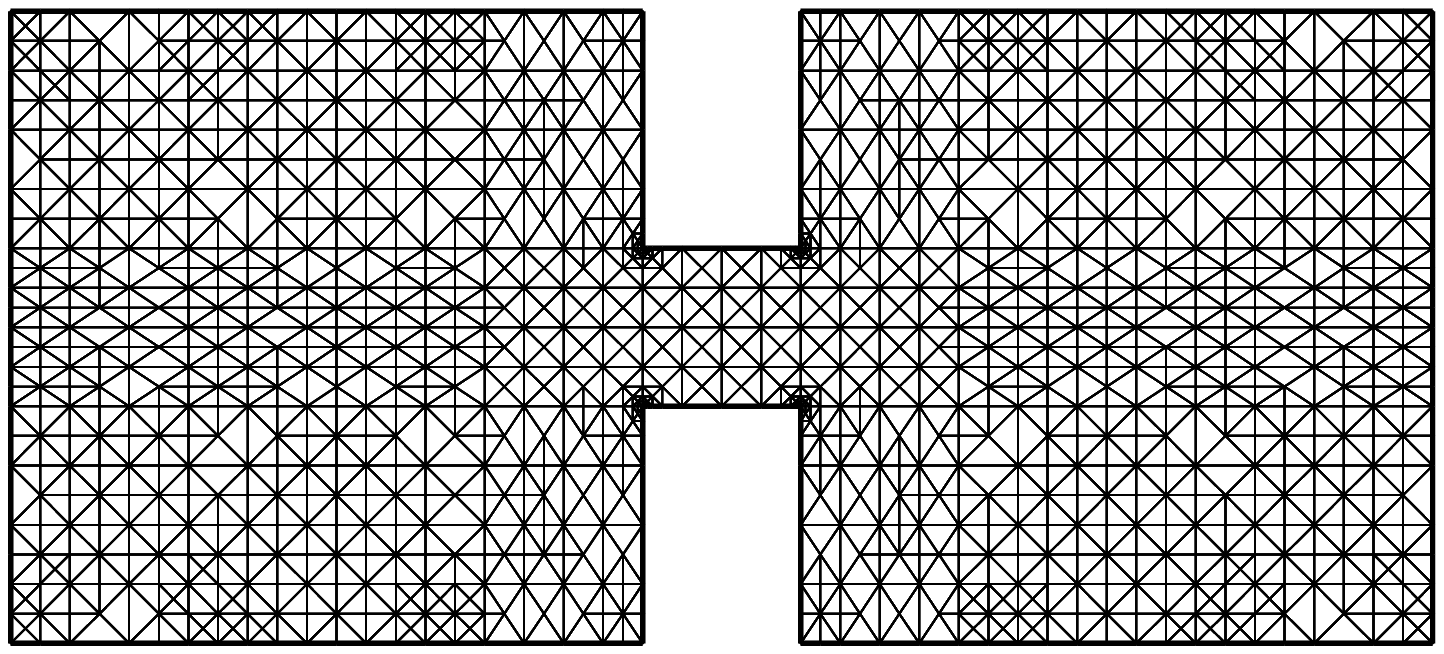}}
\end{center}
\caption{\label{fi:dumbbell}
The left panel shows the dimensions of the dumbbell shaped domain $\Omega=\Omega^{(4)}$.
The right panel presents the adaptively refined mesh after 20 adaptive steps.
}
\end{figure}

Due to singularities of eigenfunctions in reentrant corners, we utilize a mesh adaptive algorithm. This algorithm follows the standard loop:
\begin{itemize}
\item \texttt{SOLVE}.
Given a mesh $\cT_h$, we compute finite element approximate eigenpairs $\lambda_{h,n}$, $u_{h,n}$, for $n=r,\dots,s$, see \eqref{eq:EPFEM}.
\item \texttt{ESTIMATE}. We solve local problems \eqref{min_prob_qa_1}--\eqref{min_prob_qa_2}, reconstruct the flux \eqref{eq:defq}, and compute error estimator \eqref{eq:eta} for $n=r,\dots,s$.
We check if the left-hand side inequality of condition \eqref{eq:nucond} is satisfied.
If so, we compute the lower bound \eqref{eq:Kato} for $n=r,\dots,s$. In order to obtain as accurate estimates as possible, we evaluate these lower bounds recursively, as described at the end of Section~\ref{se:lbounds}.
\item \texttt{MARK}.
We use quantities $F_K$, see \eqref{eq:defFK} and \eqref{eq:etasimp}, to mark elements for refinement. For each element we have $s-r+1$ quantities $F_K$ corresponding to eigenvalues $\lambda_n$, $n=r,\dots,s$. We use their maximum as the error indicator for the mesh refinement.
We mark elements by the bulk criterion \cite{Dorfler:1996}.
\item \texttt{REFINE}.
We refine the marked elements by the newest vertex bisection algorithm, see e.g. \cite{SchSie2005}.
\end{itemize}
In order to compare the accuracy of computed eigenvalue bounds, we stop this adaptive algorithm as soon as the number of degrees of freedom exceeds 750\,000.

\begin{figure}
\begin{tikzpicture}[scale=0.25]
\draw [thick] (0,0)--(9,0);
\draw [thick] (9,0)--(9,4)--(0,4)--(0,0);
\node [right] at (1,2) {$\Omega^{(0)}$};
\end{tikzpicture}
\
\begin{tikzpicture}[scale=0.25]
\draw [thick] (0,0)--(4,0)--(4,0.375)--(5,0.375)--(5,0)--(9,0);
\draw [thick] (9,0)--(9,4)--(5,4)--(5,3.625)--(4,3.625)--(4,4)--(0,4)--(0,0);
\draw [big arrowhead, thin] (4.8,0.375)--(4.8,3.625);
\draw [big arrowhead, thin] (4.8,3.625)--(4.8,0.375);
\node [right] at (4.8,2) {$\frac{13\pi}{16}$};
\node [right] at (0.9,2) {$\Omega^{(1)}$};
\end{tikzpicture}
\
\begin{tikzpicture}[scale=0.25]
\draw [thick] (0,0)--(4,0)--(4,0.75)--(5,0.75)--(5,0)--(9,0);
\draw [thick] (9,0)--(9,4)--(5,4)--(5,3.25)--(4,3.25)--(4,4)--(0,4)--(0,0);
\draw [big arrowhead, thin] (4.8,0.75)--(4.8,3.25);
\draw [big arrowhead, thin] (4.8,3.25)--(4.8,0.75);
\node [right] at (4.8,2) {$\frac{5\pi}{8}$};
\node [right] at (0.8,2) {$\Omega^{(2)}$};
\end{tikzpicture}
\
\begin{tikzpicture}[scale=0.25]
\draw [thick] (0,0)--(4,0)--(4,1.125)--(5,1.125)--(5,0)--(9,0);
\draw [thick] (9,0)--(9,4)--(5,4)--(5,2.875)--(4,2.875)--(4,4)--(0,4)--(0,0);
\draw [big arrowhead, thin] (4.8,1.125)--(4.8,2.875);
\draw [big arrowhead, thin] (4.8,2.875)--(4.8,1.125);
\node [right] at (4.8,2) {$\frac{7\pi}{16}$};
\node [right] at (0.65,2) {$\Omega^{(3)}$};
\end{tikzpicture}
\
\begin{tikzpicture}[scale=0.25]
\draw [thick] (0,0)--(4,0)--(4,1.5)--(5,1.5)--(5,0)--(9,0);
\draw [thick] (9,0)--(9,4)--(5,4)--(5,2.5)--(4,2.5)--(4,4)--(0,4)--(0,0);
\draw [big arrowhead, thin] (4.8,1.5)--(4.8,2.5);
\draw [big arrowhead, thin] (4.8,2.5)--(4.8,1.5);
\node [right] at (4.8,2) {$\frac{\pi}{4}$};
\node [right] at (0.75,2) {$\Omega^{(4)}$};
\end{tikzpicture}
\caption{\label{fi:homotopy}
The five domains we use for the homotopy transition between the rectangle $\Omega^{(0)}$ and the dumbbell shaped domain $\Omega^{(4)}$.
}
\end{figure}
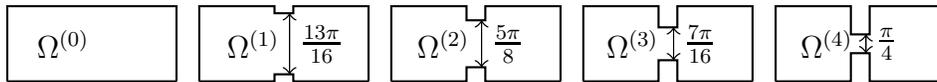

\subsection*{Bound $L_n$ with the homotopy method.}
The quantity $\nu$ satisfying \eqref{eq:nucond} is computed by the homotopy method described in detail in \cite{Plum1990}. In particular, we use the specific procedure from \cite[Sec.~6.1, par.~b)]{Plum1991}.
We consider a sequence of five domains $\Omega^{(\ihom)}$, $\ihom=0,1,\dots,4$. These domains are interiors of the union of squares $(0,\pi)^2$, $(5\pi/4,9\pi/4)\times(0,\pi)$, and the rectangle $[\pi,5\pi/4]\times(y_1,\pi-y_1)$, where $y_1 = 3\ihom\pi/32$ and $\ihom = 0,1,\dots,4$, see Figure~\ref{fi:homotopy}. Notice that $\Omega^{(0)} = (0,9\pi/4)\times(0,\pi)$ is a rectangle and $\Omega^{(4)} = \Omega$ is the targeted dumbbell shaped domain.
Moreover, these domains are nested $\Omega^{(4)} \subset \Omega^{(3)} \subset \cdots \subset \Omega^{(0)}$ and thus the eigenvalues on the larger domain are below the corresponding eigenvalues on the smaller domain. More accurately, if $\lambda^{(\ihom)}_n$ denotes the $n$-th eigenvalue of the Laplace eigenvalue problem in the domain $\Omega^{(\ihom)}$ then
the Courant minimax principle \cite{BabOsb:1991} implies that
\begin{equation}
  \label{eq:hombound}
  \lambda^{(\ihom-1)}_n \leq \lambda^{(\ihom)}_n,
  \quad\text{for } \ihom = 1,2,3,4 \text{ and } n=1,2,\dots.
\end{equation}

Eigenvalues on the rectangle $\Omega^{(0)}$ are known analytically:
$$
  \lambda^{(0)}_{i,j} = 16 i^2 / 81 + j^2, \quad i,j=1,2,\dots.
$$
One of these values can be chosen for $\nu$ to compute lower bounds on the domain $\Omega^{(1)}$.
Due to the large spectral gap $\lambda^{(0)}_{21} - \lambda^{(0)}_{20}$, see Table~\ref{ta:res}, we choose
$\nu = 16.1111 < 145/9 = \lambda^{(0)}_{6,3} = \lambda^{(0)}_{21}$.
By \eqref{eq:hombound} we have $\nu < \lambda^{(1)}_{21}$.
Hence, the right-hand side inequality of condition \eqref{eq:nucond} is satisfied for $s=20$
and we can use \eqref{eq:Kato}, see Theorem~\ref{th:Katobound}, to compute lower bounds
on $\lambda^{(1)}_1, \dots, \lambda^{(1)}_{20}$.

We compute these lower bounds by the adaptive algorithm described above using initial meshes depicted in Figure~\ref{fi:mesh1}. Note that in order to evaluate $\eta_n^{(\mathrm{b})}$ in \eqref{eq:eta} we use $\underline{\lambda}_1 = \lambda^{(0)}_1 = 97/81$ computed analytically for the rectangle $\Omega^{(0)}$. This value is guaranteed to be below $\lambda^{(\ihom)}_1$ for all $\ihom = 1,2,3,4$, see \eqref{eq:hombound}.

\begin{figure}%
\includegraphics[width=0.23\textwidth]{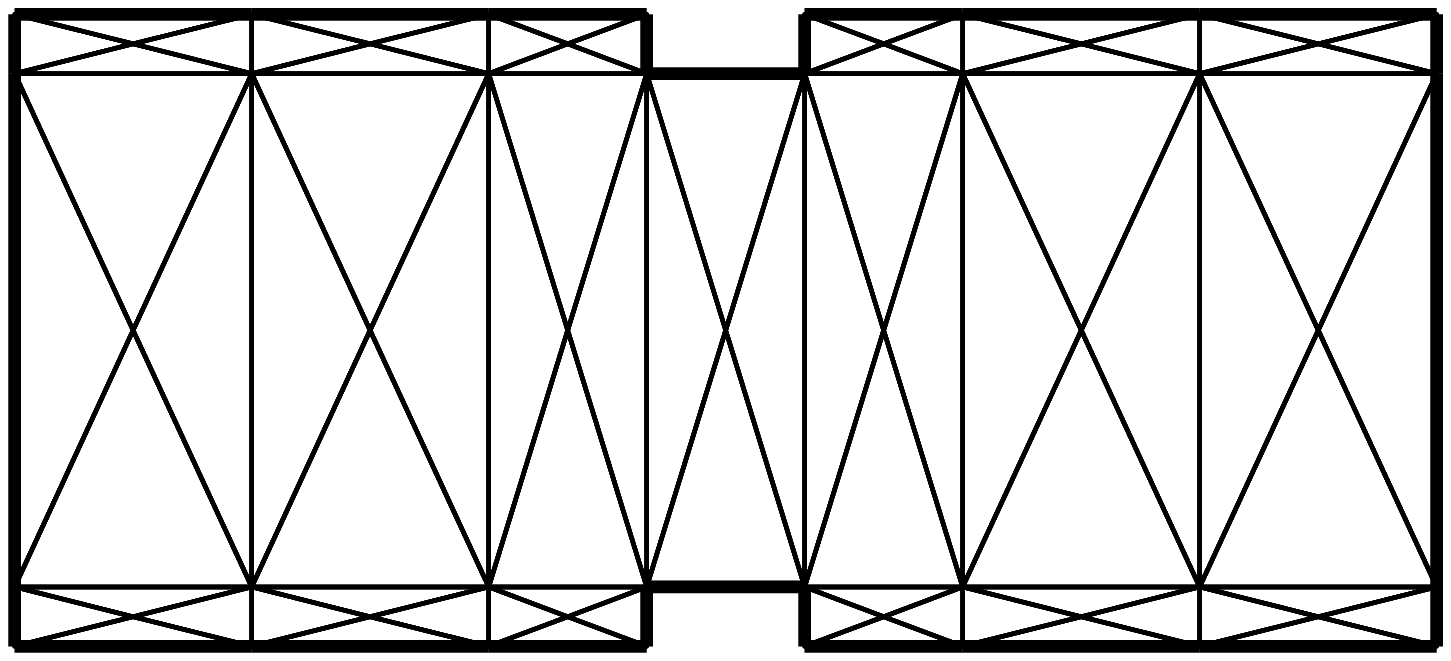}\ \
\includegraphics[width=0.23\textwidth]{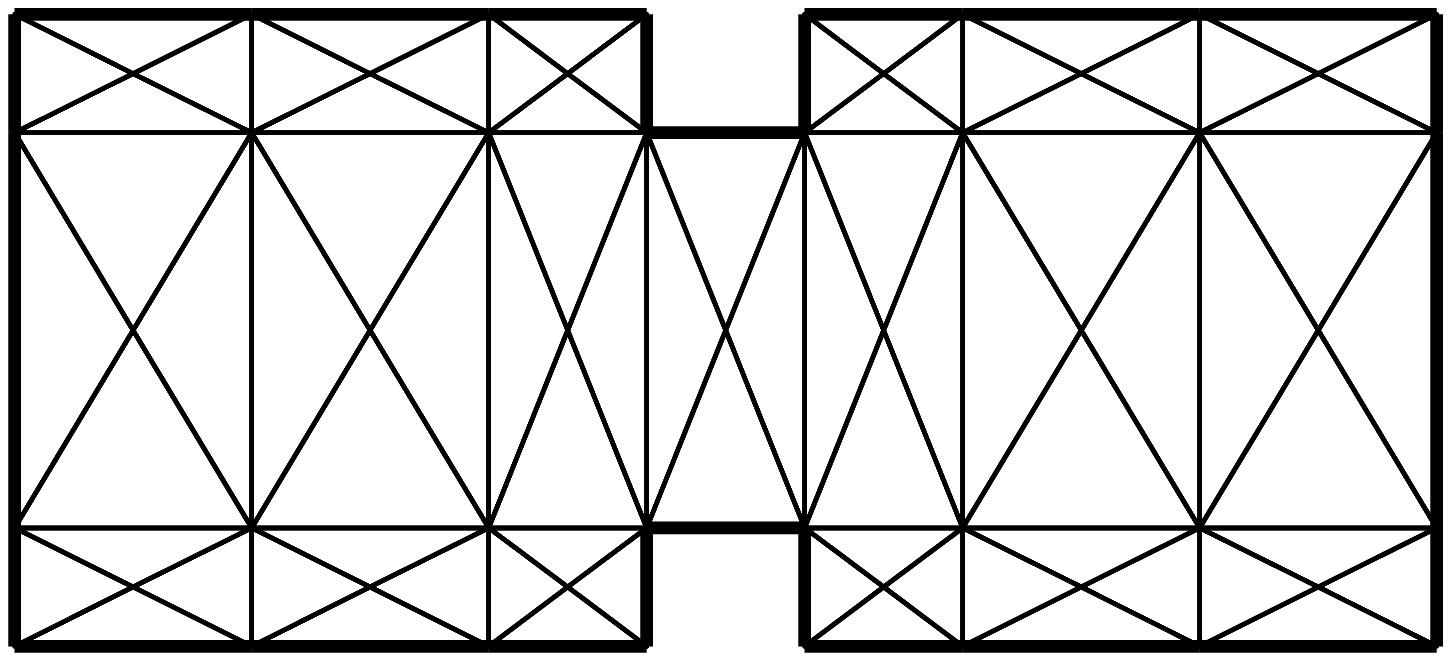}\ \
\includegraphics[width=0.23\textwidth]{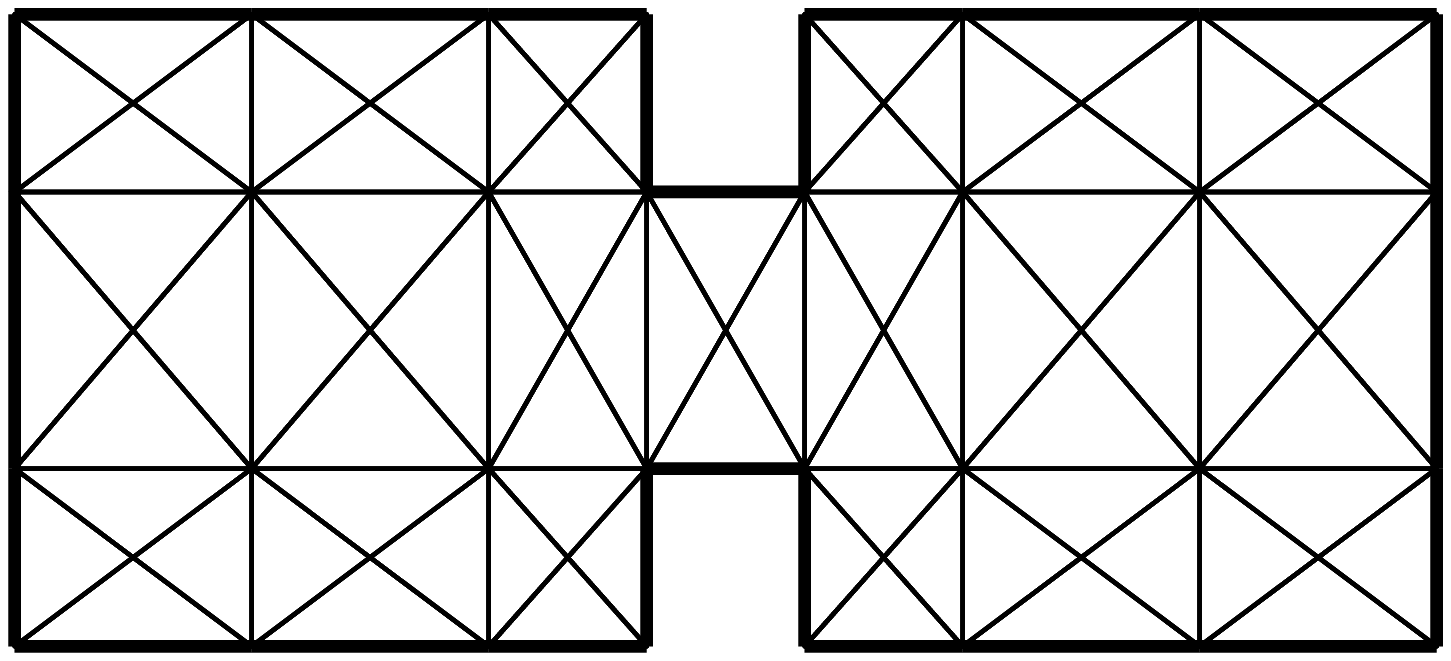}\ \
\includegraphics[width=0.23\textwidth]{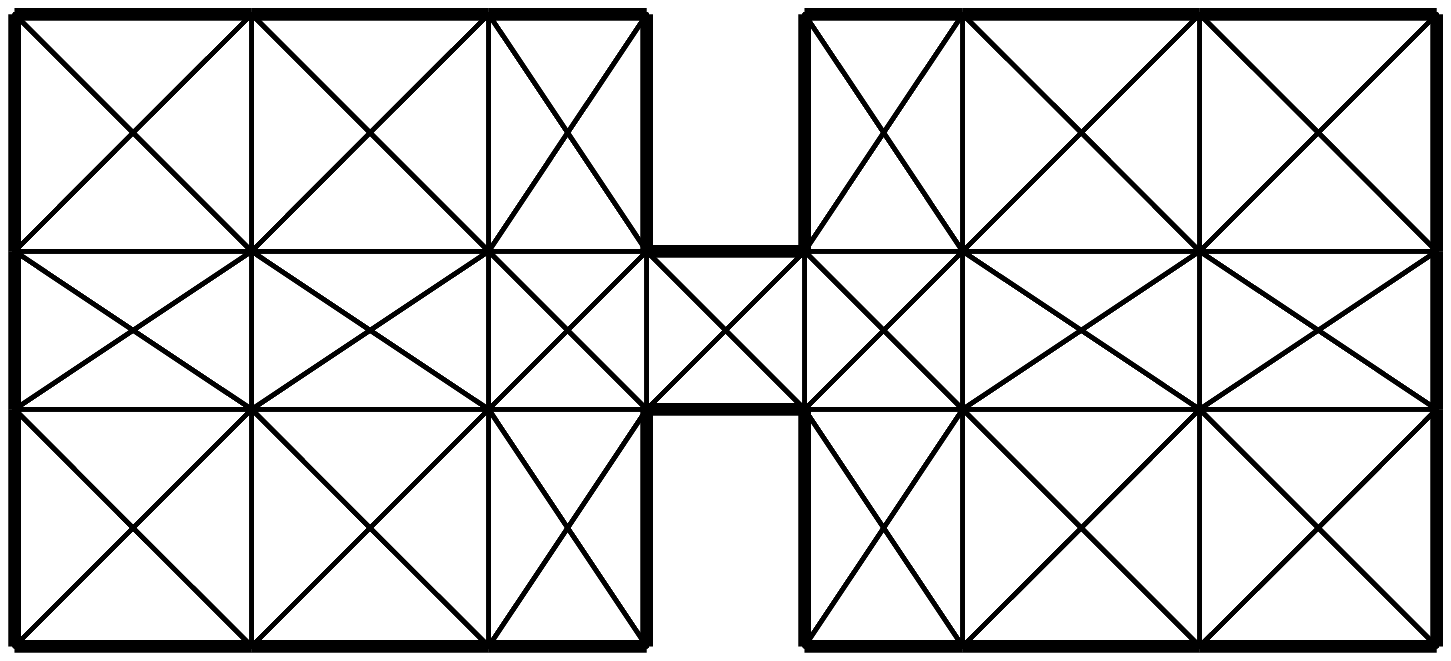}\ \
\caption{\label{fi:mesh1}
Initial meshes on domains $\Omega^{(1)}$, \dots, $\Omega^{(4)}$, respectively.
}
\end{figure}

To continue the homotopy method, we set $\nu = L^{(1)}_{20}$, where $L^{(1)}_{20}$ is the lower bound on $\lambda^{(1)}_{20}$ for the domain $\Omega^{(1)}$. Using this value, we
repeat the adaptive process for $\Omega^{(2)}$ and compute lower bounds $L^{(2)}_1$, \dots, $L^{(2)}_{19}$ on the first nineteen eigenvalues for the domain $\Omega^{(2)}$. Then we choose $\nu = L^{(2)}_{18}$ and compute by the same procedure lower bounds $L^{(3)}_1$, \dots, $L^{(3)}_{17}$ for the domain $\Omega^{(3)}$. Note that we did not choose $L^{(2)}_{19}$ for $\nu$, because the spectral gap $\lambda^{(2)}_{19} - \lambda^{(2)}_{18}$ is too small, see Table~\ref{ta:res}. Finally, we take $\nu = L^{(3)}_{17}$ and compute lower bounds on the first sixteen eigenvalues for the targeted domain $\Omega^{(4)} = \Omega$.

Table~\ref{ta:res} presents the analytically computed eigenvalues for $\Omega^{(0)}$ and two-sided bounds on eigenvalues obtained in the last adaptive step for domains $\Omega^{(1)}$, \dots, $\Omega^{(4)}$. We observe that the smaller eigenvalues are approximated with higher accuracy than the larger eigenvalues. This is caused by the recursive evaluation of the lower bound \eqref{eq:Kato}, see the description at the end of Section~\ref{se:lbounds}, 
and also by the fact that the higher eigenfunctions are more oscillatory and hence more difficult to approximate.

\begin{table}
\begin{tabular}{c|ccccc|c}
 & \multicolumn{5}{c|}{Bound $L_n$ with the homotopy method} & Combination \\
 & $\Omega^{(0)}$  & $\Omega^{(1)}$  & $\Omega^{(2)}$  & $\Omega^{(3)}$ & $\Omega^{(4)}$ & of $\ell_n$ and $L_n$ \\
\hline
$\lambda_{1}$ &   1.19753 & $1.351^{35}_{48}$    & $1.5974^{0}_{9}$    & $1.8301^{3}_{9}$    & $1.955^{76}_{81}$    & $1.955^{76}_{81}$ \\[1pt]
$\lambda_{2}$ &   1.79012 & $1.804^{18}_{27}$    & $1.8414^{3}_{8}$    & $1.8988^{0}_{3}$    & $1.960^{67}_{70}$    & $1.960^{67}_{70}$ \\[1pt]
$\lambda_{3}$ &   2.77778 & $2.926^{02}_{28}$    & $3.30^{181}_{211}$  & $4.11^{058}_{130}$  & $4.^{79998}_{80085}$ & $4.^{79998}_{80085}$ \\[1pt]
$\lambda_{4}$ &   4.16049 & $4.20^{787}_{916}$   & $4.33^{671}_{743}$  & $4.558^{22}_{86}$   & $4.829^{36}_{99}$    & $4.829^{36}_{99}$ \\[1pt]
$\lambda_{5}$ &   4.19753 & $4.62^{030}_{323}$   & $4.8^{7882}_{8018}$ & $4.971^{27}_{77}$   & $4.996^{50}_{91}$    & $4.996^{50}_{91}$ \\[1pt]
$\lambda_{6}$ &   4.79012 & $4.83^{558}_{720}$   & $4.91^{473}_{548}$  & $4.97^{384}_{409}$  & $4.996^{72}_{92}$    & $4.996^{72}_{92}$ \\[1pt]
$\lambda_{7}$ &   5.77778 & $6.02^{490}_{814}$   & $6.234^{14}_{84}$   & $7.08^{243}_{406}$  & $7.98^{460}_{714}$   & $7.98^{460}_{714}$ \\[1pt]
$\lambda_{8}$ &   5.93827 & $6.4^{0875}_{1330}$  & $7.40^{406}_{784}$  & $7.87^{152}_{502}$  & $7.98^{594}_{721}$   & $7.98^{594}_{721}$ \\[1pt]
$\lambda_{9}$ &   7.16049 & $7.32^{420}_{919}$   & $7.63^{297}_{553}$  & $7.88^{669}_{865}$  & $9.35^{272}_{752}$   & $9.35^{275}_{752}$ \\[1pt]
$\lambda_{10}$ &  8.11111 & $8.1^{8723}_{9010}$  & $8.37^{605}_{743}$  & $8.78^{241}_{440}$  & $9.5^{0753}_{1108}$  & $9.5^{0756}_{1108}$ \\[1pt]
$\lambda_{11}$ &  8.93827 & $9.^{38939}_{41074}$ & $9.9^{3808}_{5045}$ & $9.9^{8859}_{9296}$ & $9.99^{755}_{993}$   & $9.99^{757}_{993}$ \\[1pt]
$\lambda_{12}$ &  9.19753 & $9.7^{6375}_{8678}$  & $9.95^{150}_{772}$  & $9.99^{083}_{329}$  & $9.99^{874}_{993}$   & $9.99^{875}_{993}$ \\[1pt]
$\lambda_{13}$ &  9.79012 & $9.8^{5854}_{7295}$  & $10.7^{717}_{814}$  & $11.12^{41}_{65}$   & $12.^{6271}_{7319}$  & $12.^{6540}_{7319}$ \\[1pt]
$\lambda_{14}$ &  10.6790 & $10.^{6940}_{7023}$  & $10.9^{252}_{322}$  & $12.6^{194}_{408}$  & $12.^{7402}_{8306}$  & $12.^{7531}_{8306}$ \\[1pt]
$\lambda_{15}$ &  10.7778 & $11.^{3961}_{4158}$  & $12.0^{308}_{459}$  & $12.6^{669}_{832}$  & $12.^{8974}_{9693}$  & $12.^{8934}_{9693}$ \\[1pt]
$\lambda_{16}$ &  11.1111 & $12.0^{538}_{855}$   & $12.6^{336}_{505}$  & $12.^{6953}_{7073}$ & $12.9^{335}_{695}$   & $12.^{8937}_{9695}$ \\[1pt]
$\lambda_{17}$ &  12.1605 & $12.4^{238}_{417}$   & $12.8^{735}_{834}$  & $13.13^{16}_{86}$   &  \multicolumn{2}{c}{}  \\[1pt]
$\lambda_{18}$ &  13.6420 & $13.7^{351}_{576}$   & $14.0^{437}_{570}$  &  &  \multicolumn{2}{c}{}  \\[1pt]
$\lambda_{19}$ &  13.6790 & $13.^{8915}_{9068}$  & $14.6^{443}_{675}$  &  &  \multicolumn{2}{c}{}  \\[1pt]
$\lambda_{20}$ &  13.9383 & $15.^{1989}_{2422}$  &  &  &  \multicolumn{2}{c}{}  \\[1pt]
$\lambda_{21}$ &  16.1111 &  &  &  &  \multicolumn{2}{c}{}  \\
\end{tabular}
\caption{\label{ta:res}
Two-sided bounds computed in the last adaptive step. Digits in the upper/lower index indicate the lower/upper bound, respectively.
Eigenvalues on the rectangle $\Omega^{(0)}$ are known analytically.
Columns $\Omega^{(1)}$, \dots, $\Omega^{(4)}$ correspond to homotopy steps for lower bound $L_n$.
The last column presents the combination of lower bounds $\ell_n$ and $L_n$
for the dumbbell shaped domain $\Omega^{(4)} = \Omega$.
}
\end{table}
%

We note that the original goal was to compute two-sided bounds of the first ten eigenvalues, but it is advantageous to compute slightly more of them, because the performance of the lower bound $L_n$ depends on the size of the corresponding spectral gaps.
For example in the case of $\Omega^{(4)}$, we obtain $\nu$ in the reasonable large spectral gap between $\lambda_{16}$ and $\lambda_{17}$, which enables to find accurate lower bounds for the first ten eigenvalues.

\begin{figure}
\includegraphics[width=0.485\textwidth]{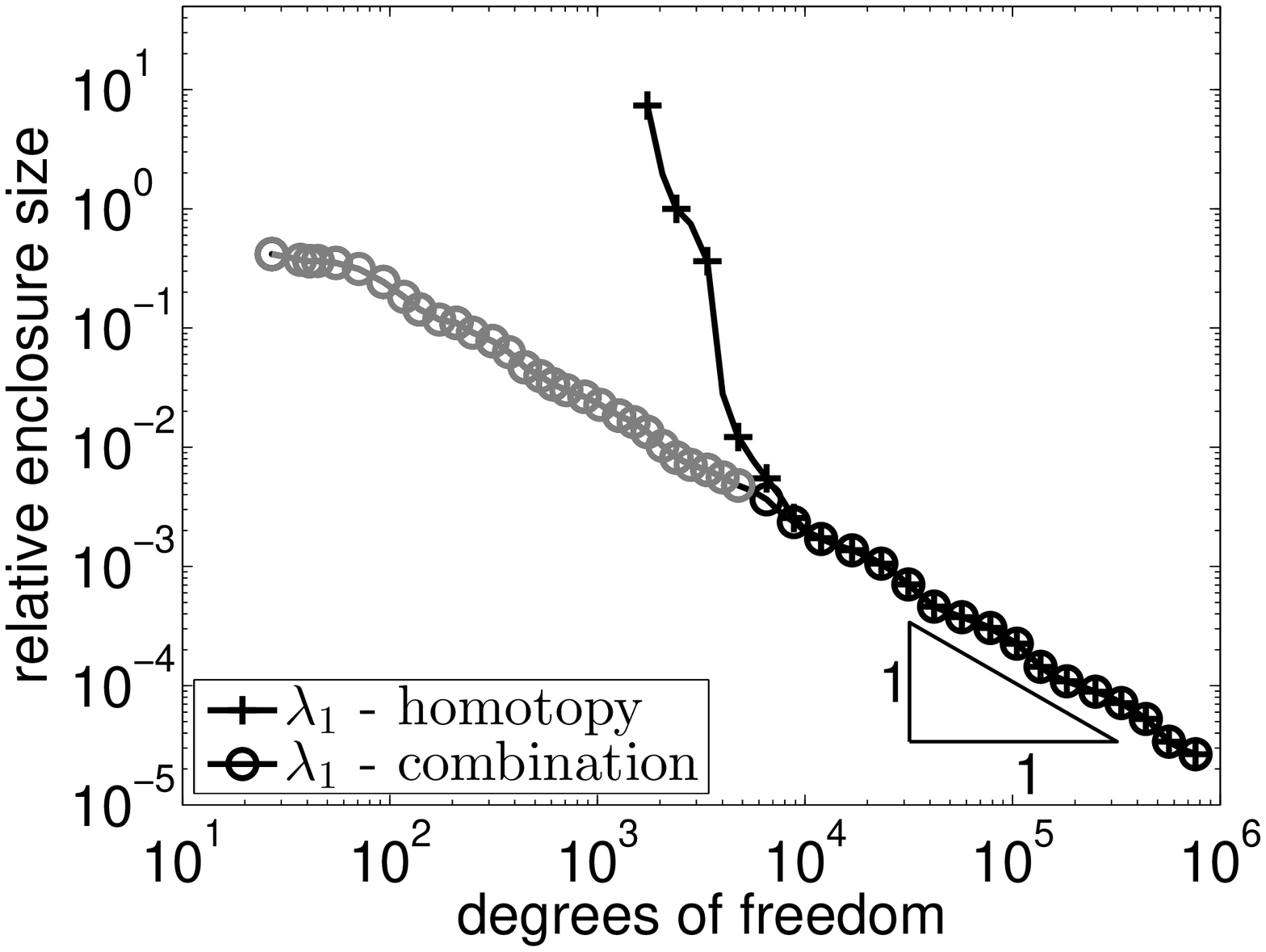}\quad
\includegraphics[width=0.485\textwidth]{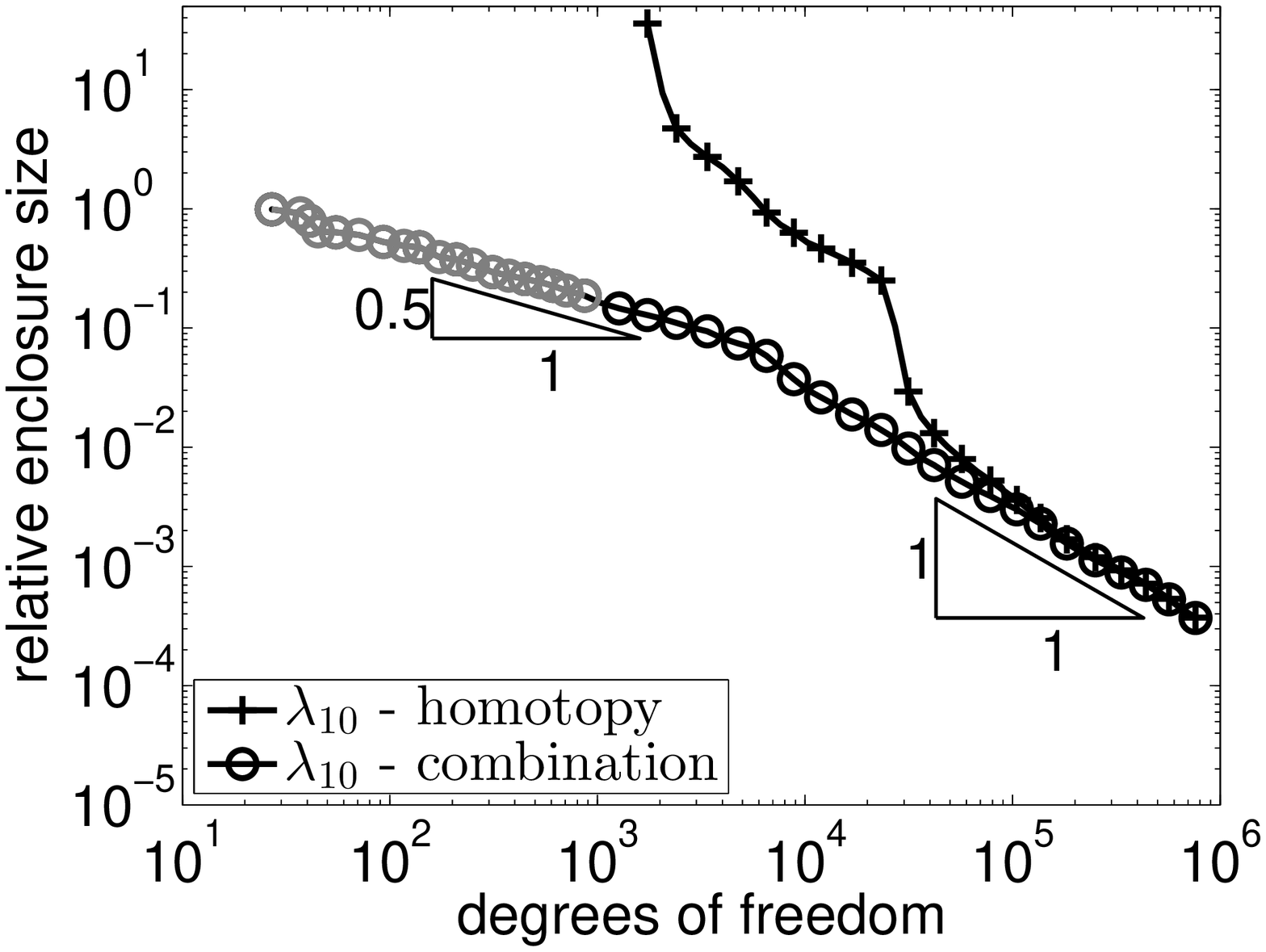}
\caption{\label{fi:conv}
Convergence curves for relative enclosure sizes $(\lambda_{h,n} - L_n)/L_n$ (homotopy) and $(\lambda_{h,n} - \underline{\lambda}_{h,n})/\underline{\lambda}_{h,n}$ (combination) for $n=1$ (left) and $n=10$ (right) with respect to the number of degrees of freedom.
Crosses correspond to the last homotopy step, circles to the combination of $\ell_n$ and $L_n$, see \eqref{eq:Wein} and \eqref{eq:Kato}.
Data points, where the best bounds indicate invalidity of closeness condition \eqref{eq:closest} are coloured in grey.
}
\end{figure}

\subsection*{Combination of $\ell_n$ and $L_n$.}
The rough idea is to compute the lower bound on $\lambda_n$ as $\max\{\ell_n, L_n \}$ and to use $\nu = \ell_{n+1}$ to evaluate $L_n$. Quantities $\ell_n$ and $L_n$ are given by \eqref{eq:Wein} and \eqref{eq:Kato}, respectively. This approach is computationally less demanding, because the expensive homotopy is not performed. On the other hand, this approach does not provide guaranteed lower bounds on eigenvalues due to the closeness condition \eqref{eq:closest}, which cannot be verified in practice. However, numerical experiments we performed indicate that the computed bounds are very reliable. In our numerical tests, we never found the computed lower bound to be above the best available upper bound. This applies not only to the situation when the problem is well resolved and the closeness condition \eqref{eq:closest} and the lower bound condition \eqref{eq:nucond} are likely to hold, but even to cases when the accuracy is low and these conditions are probably not satisfied. The reason is that these conditions are sufficient and if they are not satisfied then the computed lower bounds can still be below the true eigenvalues. Numerical results show that this is a very common situation.

Let us describe the algorithm in more details. It is based on the above specified adaptive algorithm with two changes. First, step \texttt{SOLVE} computes eigenpairs for $n=r,\dots,s+1$. Second,
step \texttt{ESTIMATE} is changed as follows:
\begin{itemize}
\item Solve local problems \eqref{min_prob_qa_1}--\eqref{min_prob_qa_2}, reconstruct the flux \eqref{eq:defq}, and compute error estimator $\eta_n$ by \eqref{eq:eta} for all eigenpairs $\lambda_{h,n}$, $u_{h,n}$, $n=r,\dots,s+1$.
\item Compute lower bounds $\ell_r,\dots,\ell_{s+1}$ by \eqref{eq:Wein}.
\item Set $\nu = \ell_{s+1}$, check the left-hand side inequality of condition \eqref{eq:nucond}, and compute lower bounds $L_r,\dots, L_s$ by the recursive application of \eqref{eq:Kato}.
\item The final lower bound is given by $\underline{\lambda}_{h,n} = \max\{\ell_n, L_n\}$ for all $n=r,\dots,s$.
\end{itemize}


The results of this procedure for the Laplace eigenvalue problem in the dumbbell shaped domain with $r=1$ and $s=15$ are provided in the last column of Table~\ref{ta:res}.
For illustration, Figure~\ref{fi:dumbbell} (right) shows the adaptively refined mesh after 20 adaptive steps.
In Figure~\ref{fi:conv}, we also present convergence curves for relative enclosure sizes $\Erelestn = (\lambda_{h,n} - \underline{\lambda}_{h,n})/\underline{\lambda}_{h,n}$.
Note that $\Erelestn$ bounds the true relative error: $(\lambda_{h,n} - \lambda_n)/\lambda_n \leq \Erelestn$ and, thus, provides a reliable information about the true error.
Figure~\ref{fi:conv} shows $\Erelestn$ for the first (left) and the tenth (right) eigenvalue. We observe that the combination of the adaptive algorithm and the quadratically convergent bound $L_n$ yields the expected order of convergence on sufficiently fine meshes.
We point out that suboptimal lower bound $\ell_n$ is useful on relatively coarse meshes, where it provides more accurate results than $L_n$. Indeed, convergence curves provided in Figure~\ref{fi:conv} typically start with a suboptimal slope corresponding to $\ell_n$ and as soon as the mesh is sufficiently fine, the optimal bound $L_n$ overcomes.
We also note that $L_n$ is not evaluated on those rough meshes where the left-hand side inequality in \eqref{eq:nucond} is not satisfied, see for example the curves for the homotopy method, where data points for small numbers of degrees of freedom are missing.
Finally, we used the best available lower bounds $\underline{\lambda}_{h,n}^\mathrm{best}$ in the last adaptive step and tested if $\lambda_{h,n} \leq \sqrt{ \underline{\lambda}_{h,n}^\mathrm{best} \underline{\lambda}_{h,n+1}^\mathrm{best} }$. Those points, where this test passed and closeness condition \eqref{eq:closest} is satisfied, are indicated as black circles in Figure~\ref{fi:conv}. Notice that even in grey data points, where closeness condition \eqref{eq:closest} is probably not valid, lower bound $\ell_n$ always produced a value below the best available lower bound and hence below the true eigenvalue.



Comparing results in Table~\ref{ta:res}, we observe that values in the last column are almost the same as values in the previous column. Thus, the combination of bounds $\ell_n$ and $L_n$ yielded almost the same lower bounds on eigenvalues although they are not guaranteed by the theory. Thus, if the bounds are not required to be guaranteed, we recommend to use the simple combination of bounds $\ell_n$ and $L_n$ rather than the homotopy method.

%

\section{Conclusions}
\label{se:concl}

In this paper we generalized classical Weinstein's and Kato's bounds to the weak setting suitable for direct application of the finite element method. Needed guaranteed bounds on the representative of the residual are computed by the complementarity technique using the local flux reconstruction. Formulas \eqref{eq:Wein} and \eqref{eq:Kato} for lower bounds $\ell_n$ and $L_n$ are simple and explicit.
The bound $L_n$ is quadratically convergent and provides guaranteed lower bounds on all eigenvalues $\lambda_1, \lambda_2, \dots, \lambda_s$, provided a guaranteed lower bound $\nu$ on $\lambda_{s+1}$ is available. The computational efficiency of the bound $L_n$ depends on the size of the spectral gap $\lambda_{s+1}-\lambda_s$. Therefore, we recommend to first identify the index $s$ such that the spectral gap $\lambda_{s+1}-\lambda_s$ is relatively large and then use a guaranteed lower bound $\nu$ on $\lambda_{s+1}$ to estimate the smaller eigenvalues from below. For the optimal usage of this bound, we recommend the recursive algorithm described at the end of Section~\ref{se:lbounds}.

The guaranteed lower bound on $\lambda_{s+1}$ can be computed, for example, by the homotopy method as we illustrated in Section~\ref{se:numex}. Numerical experiments, however, show that $L_n$ need not be sufficiently accurate on rough meshes. Therefore, we recommend to compute both $\ell_n$ and $L_n$ and use the larger of them. We note that $\ell_n$ can be evaluated very cheaply as soon as $\eta_n$ is available.
The bound $\ell_n$ is guaranteed to be below the corresponding exact eigenvalue $\lambda_n$ if the (upper) finite element approximation $\lambda_{h,n}$ is in a sense closer to $\lambda_n$ then to $\lambda_{n+1}$, see \eqref{eq:closest}. This condition can be verified if guaranteed lower bounds on $\lambda_n$ and $\lambda_{n+1}$ are available, for example using $L_n$ and $L_{n+1}$.

If the particular application does not require the computed lower bounds to be guaranteed then the homotopy method is not necessary. We can use $\ell_n$ as the first lower bounds and then employ them to compute more accurate bounds $L_n$. We would like to emphasize that the conditions guaranteeing that $\ell_n$ and $L_n$ are below the corresponding exact eigenvalues are sufficient conditions only. Thus, if they are not satisfied then the computed bounds can still be below the exact values. Performed numerical experiments confirm that this is actually a very common case in practice.

In our future research we will focus on the proof of the local efficiency of the estimator given by Corollary~\ref{co:normw} with the local flux reconstruction satisfying \eqref{eq:defq} and \eqref{min_prob_qa_1}--\eqref{min_prob_qa_2}. We then plan to use this local efficiency result to prove the convergence of the corresponding adaptive algorithm.


\providecommand{\bysame}{\leavevmode\hbox to3em{\hrulefill}\thinspace}
\providecommand{\MR}{\relax\ifhmode\unskip\space\fi MR }
\providecommand{\MRhref}[2]{%
  \href{http://www.ams.org/mathscinet-getitem?mr=#1}{#2}
}
\providecommand{\href}[2]{#2}

\end{document}